\newtheorem{theorem}{Theorem}[section]
\newtheorem{lemma}[theorem]{Lemma}
\newtheorem{corollary}[theorem]{Corollary}
\numberwithin{equation}{section}
\newcommand{\bbR}{\mathbb{R}}
\newcommand{\bbN}{\mathbb{N}}
\newcommand{\bbQ}{\mathbb{Q}}
\newcommand{\bbZ}{\mathbb{Z}}
\newcommand{\cH}{\mathcal{H}}
\renewcommand{\Re}{\mathop{\mathrm{Re}}}
\newcommand{\SO}{\mathop{\mathrm{SO}}}
\newcommand{\al}{{\mathord{\alpha}}}
\newcommand{\be}{{\mathord{\beta}}}
\newcommand{\ga}{{\mathord{\gamma}}}
\newcommand{\Ga}{{\mathord{\Gamma}}}
\newcommand{\la}{{\mathord{\lambda}}}
\newcommand{\De}{{\mathord{\Delta}}}
\newcommand{\ep}{{\mathord{\varepsilon}}}
\newcommand{\eps}{{\mathord{\epsilon}}}
\newcommand{\ze}{{\mathord{\zeta}}}
\newcommand{\frR}{\mathord{\mathfrak{R}}}
\newcommand{\frS}{\mathord{\mathfrak{S}}}
\newcommand{\tsfT}{\hbox{\tiny\sf T}}
\let\td=\tilde
\begin{document}

\title[]
{Factorization of special harmonic polynomials of three variables}


\author{V. Gichev}
\address{Sobolev Institute of Mathematics\\
Omsk Branch\\
ul. Pevtsova, 13, \\ 644099, Omsk, Russia}
\email{gichev@ofim.oscsbras.ru}
\curraddr{}




\begin{abstract}
We consider harmonic polynomials of real variables $x,y,z$ that are eigenfunctions of the rotations 
about the axis $z$. They have the form $(x\pm yi)^{n}p(x,y,z)$, where $p$ is a rotation invariant polynomial. Let 
$\frR_{m}$ be the family of the polynomials $p$ of degree $m$ which are reducible over the rationals.  
We describe $\frR_{m}$ for $m\leq5$ and prove that $\frR_{6}$ and $\frR_{7}$ are finite. 
\end{abstract}

\maketitle


\section{Introduction}
The space $\cH$ of all complex valued harmonic polynomials on $\bbR^{3}$ admits a linear basis of the form 
\begin{eqnarray*}
f(x,y,z)=(x\pm yi)^{k}p(x,y,z), 
\end{eqnarray*}
where $p$ is a homogeneous polynomial which depends only on $x^{2}+y^{2}$ and $z$. 
The polynomials $f$ are harmonic eigenfunctions of the circle group $T_{o}$ of rotations about the axis $z$. 
The polynomials $p$ are $T_{o}$-invariant and, moreover, they are closely related to the associated Legendre 
functions $P_{n}^{m}$ as well as to the Laplace functions $Y_{n}^{m}$.  If $k=0$, then a change of variables and 
normalization 
converts $p$ into the Legendre polynomial $P_{n}$. In this paper, we consider the problem of reducibility 
over the field $\bbQ$ for the polynomials $p$ of small degrees. This is evidently connected with the problem of 
existence of common zeroes of a couple of harmonic polynomials and a description of their nontrivial factors. 

\subsection{Brief history} 
Algebraic and arithmetic properties of classical orthogonal polynomials are still not well understood. 
(Maybe, Tchebyshev's polynomials could be considered as an exception.)  In 1890,  
Stieltjes in a letter to Hermite (see \cite[Letter 275]{St}) formulated two questions: first, are there two distinct 
polynomials $P_{n}$ and $P_{m}$ that have a nontrivial common root?  and second, are $P_{n}(x)$ for even 
$n$ and $\frac{P_{n}(x)}{x}$ for odd $n$ irreducible?  The existence of a common root implies the reducibility 
of at least one of the polynomials. Hence the affirmative answer to the first question implies the negative answer 
to the second. However, the second question remains unanswered yet. In several special situations,  the 
irreducibility of $P_{n}$ was proved by Holt  \cite{Ho1912a}--\cite{Ho1913}, Ille \cite{Il1924}, 
McCoart \cite{McC61}, Melnikov \cite{Me56}, and  Wahab \cite{Wa52}, \cite{Wa60}. 
According to their results $P_{n}$ is irreducible for all $n\leq 500$ with a few 
 possible exceptions.  

In 1991, Armitage \cite{Ar91} proved that an entire harmonic function vanishes on the cone 
$x_{1}^{2}=\al^{2}(x_{1}^{2}+\cdots+x_{n}^{2})$ in $\bbR^{n}$, where $0<\al<1$,  if and only if $\al$ is a root of an 
ultraspherical polynomial or of some its derivative. 
Thus, we return to the delicate problem of common zeros of orthogonal polynomials again. It is worth 
mentioning that the Legendre 
polynomials cannot have quadratic factors (see \cite[Theorem~1.7 and Corollary~3.4]{Wa52}). 

The complete characterization of the quadratic divisors of harmonic polynomials on $\bbR^{n}$ in terms of 
polynomial solutions to some Fuchsian ordinary differential equations was obtained by Agranovsky and Krasnov 
in the paper \cite{AK00}.  In the earlier paper \cite{Ag99}, Agranovsky proved that a product of linear forms is 
harmonic if and only if the family of its zero hyperplanes forms a Coxeter system. The investigation was motivated by 
problems of mathematical physics and PDE, particularly the problems of injectivity of the Radon transform and 
stationary points of the wave equation. These papers also contain some open questions.  
Here is one of them (\cite[\S6, question 4]{Ag99}): does there exist a harmonic homogeneous polynomial of degree 
greater than 5 which is divisible by $x^{2}+y^{2}-2z^{2}$? The answer is still unknown. Such  polynomials of 
degree 5 exist: the real and imaginary parts of $(x+yi)^{2}z(x^{2}+y^{2}-2z^{2})$ are harmonic. 

The recent paper \cite{MWW19} by Mangoubi and Weller Weiser  is actually devoted to this problem. They formulate   
a conjecture: the space of harmonic functions on the unit ball $B\subset\bbR^{3}$  which vanish on 
the part of the cone $x^{2}+y^{2}-2z^{2}=0$ that lies in $B$ is finite dimensional (\cite[Conjecture~1]{MWW19}). It is 
weaker than the property stated in the above question. (The authors probably were not aware of it.)  
Anyway, their results support both. The methods are arithmetic  and algebraic. 

Very interesting papers \cite{LM15}, \cite{LM16} by Logunov and Malinnikova clarify the dependence of the 
harmonic functions on their zero sets. 
Example~4.3 in \cite{LM15} shows that the space of harmonic polynomials which vanish on the cone  
$3x_{0}^{2}=x_{1}^{2}+x_{2}^{2}+x_{3}^{2}$  in $\bbR^{4}$ is infinite dimensional as well as the space of 
harmonic polynomials divisible by $3x_{0}^{2}-x_{1}^{2}-x_{2}^{2}-x_{3}^{2}$. 
It is not clear if there exists a nontrivial round cone in $\bbR^{3}$ with this property. 
\subsection{Notation} The harmonic polynomial of the type 
\begin{eqnarray*}
(x+yi)^{k}p(x,y,z), 
\end{eqnarray*}
where $p$ is homogeneous of some degree and $T_{o}$-invariant, is unique up to a multiplicative constant. 
It is convenient to 
deal with complex valued polynomials while the divisibility problem is more interesting in the case of real ones. 
Without loss of generality we may assume $p$ real. Hence the real part of the above polynomial is the product of $p$ and $\Re (x+yi)^{k}$ and the same is true for the imaginary part. 

The condition that the coefficient at the highest power of $x$ in $p$ (equivalently, at the highest power of $y$ or 
at $x^{2}+y^{2}$) is equal to 1 uniquely defines $p$. To avoid cumbersome indices, we use the following 
notation for these polynomials: 
\begin{eqnarray*}
f_{n,d}(x,y,z)=(x+yi)^{n-\left[\frac{d}2\right]\,}p_{n,d}(x,y,z),
\end{eqnarray*}
where the brackets stand for the integer part of a number and $d=\deg p_{n,d}$.  Thus 
$\deg f_{n,d}$ is either $n+d$ or $n+d+1$ for $d$ even and $d$ odd,  respectively. 

In the sequel, ``reducibiulity'' means ``reducibility over $\bbQ$''. Set 
\begin{eqnarray*}
\frR_{d}=\{f_{n,d}:\,\De f_{n,d}=0~~\text{and}~~ p_{n,d}~~\text{is reducible}\}.
\end{eqnarray*}
Let $t=\frac{x^{2}+y^{2}}{z^{2}}$ and put  
\begin{eqnarray*}
\td p_{n,d}(t)=\frac{p_{n,d}(x,y,z)}{z^{d}}. 
\end{eqnarray*}
Then $f_{n,d}\in\frR_{d}$ if and only if the polynomial $\td p_{n,d}$ is reducible. If $d\leq7$, then 
$\deg\td p_{n,d}\leq3$. Hence for $d\leq7$ we have the equivalence 
\begin{eqnarray*}
f_{n,d}\in\frR_{d}~~~\Longleftrightarrow~~~\td p_{n,d}~~\text{has a rational root}.
\end{eqnarray*}
For short, we denote 
\begin{eqnarray*}
\ze=x+yi. 
\end{eqnarray*}
\subsection{Methods and results} 
The case $d\leq3$ is trivial: the polynomials $\ze^{n}$ and $\ze^{n}z$ are always harmonic, for $d=2,3$  
$\De f_{n,d}=0$ if and only if there is $n\in\bbN$ such that 
\begin{eqnarray*}
&f_{n,2}=\ze^{n-1}(\ze\bar\ze-2nz^{2}),\\
&f_{n,3}=\ze^{n-1}z\left(\ze\bar\ze-\frac{2}{3}nz^{2}\right),
\end{eqnarray*} 
respectively. Thus we assume $4\leq d\leq7$ in the sequel. 

There is one-to-one correspondence between the solutions $u,v\in\bbN\setminus\{1\}$ 
to the Pell equation $u^{2}-6v^{2}=1$ and  $\frR_{4}$. For $\frR_{5}$ there are three similar series of solutions 
to the Pell type equation $u^{2}-10v^{2}=9$. The methods are elementary: the polynomials $\td t_{d}$ are quadratic 
and the rationality of a root is equivalent to some Diophantine equation since $n$ is integer.

In the case $d=6$ we prove that  $n=Ku^{3}$ and $2n+1=Lv^{3}$, where $u,v\in\bbN$ and $K,L$ runs over 
a finite subset of $\bbN$.  Hence $u,v$ satisfy the Diophantine equation $Lv^{3}-2Ku^{3}=1$. 
If $d=7$, then a similar assertion holds for $n$, $2n+3$, and the equation $Lv^{3}-2Ku^{3}=3$. The sets 
$\frR_{6}$ and $\frR_{7}$ are finite because every Diophantine equation above 
has a finite number of solutions  due to Thue--Siegel--Roth theorem on Diophantine approximation. 

\subsection{Reminder on the Pell equation}\label{pellintro}  This is the Diophantine equation $u^{2}-Dv^{2}=1$, where $D\in\bbN$
is not a square. Let $Q$ be the quadratic form on the left, $\phi=(u,v)\in\bbZ^{2}$, 
\begin{eqnarray*}
&\frS=\{\phi\in\bbN\times\bbZ:\, Q(\phi)=1\},\\
&\frS^{+}=\frS\cap(\bbN\times\bbN).
\end{eqnarray*}
The solution $\phi_{1}=(u_{1},v_{1})\in\frS^{+}$ such that $u_{1}=\min\{u:\,\phi\in\frS^{+}\}$ 
is called fundamental. The subgroup $\Ga$ of $\SO(Q,\bbZ)$ generated by 
\begin{eqnarray*}
M=\left(\begin{array}{cc}u_{1}&Dv_{1}\\ v_{1}&u_{1}\end{array}\right)
\end{eqnarray*}
preserves the lattice $\bbZ^{2}$, the quadratic form $Q$, and consequently the set $\frS$.  
Moreover, it leaves invariant the branch $B$ of the hyperbola $Q=1$ which contains $\phi_{0}=(1,0)$. 
The arc of $B$ with endpoints $\phi_{0},\phi_{1}$ is a fundamental domain of $\Ga$ in $B$.  
Since it contains no solution except for the endpoints by definition of $\phi_{1}$, $\Ga$ is transitive on $\frS$ and, moreover, 
\begin{eqnarray*}
\frS^{+}=\{M^{k}\phi_{0}:\,k\in\bbN\}.
\end{eqnarray*}  
Put $\phi_{k}=M^{k}\phi_{0}=(u_{k},v_{k})$. Any linear combination $X_{k}$ of the sequences 
$u_{k},v_{k}$ is subject to the formula 
\begin{eqnarray*}
X_{k}=C_{1}\la_{1}^{k}+C_{2}\la_{2}^{k},
\end{eqnarray*}
where $\la_{1},\la_{2}$ are the eigenvalues of $M$ and  $C_{1},C_{2}$ are constant, 
and consequently satisfies the recurrence relation 
\begin{eqnarray*}
X_{k+1}-2u_{1}X_{k}+X_{k-1}=0.
\end{eqnarray*}

\subsection{Acknowledgements} 
I am grateful to Mark Agranovsky for making me aware of this circle of problems and for 
fruitful discussions on them and the relating questions. 


\section{Reducibility of $p_{n,4}$ and the Pell equation}

\subsection{The Diophantine equation} Let the polynomial 
\begin{eqnarray*}
f_{n,4}
=\ze^{n-2} (\ze\bar\ze - A_{n}z^2) (\ze\bar\ze - B_{n}z^2)
\end{eqnarray*}
of degree $n+2$ be harmonic. Computing $\De f_{n,4}$, we get the equalities 
\begin{eqnarray*}
\begin{cases}
A_{n}+B_{n}=4n,\\
A_{n}B_{n}=\frac43\,n(n-1). 
\end{cases}
\end{eqnarray*}
Therefore,  
\begin{eqnarray}\label{ABnm4}
\begin{cases}
A_{n}=2\Big(n-\sqrt{\frac{2n^{2}+n}{3}}\Big),\\
B_{n}=2\Big(n+\sqrt{\frac{2n^{2}+n}{3}}\Big). 
\end{cases}
\end{eqnarray} 
Hence $0<A_{n}\leq B_{n}$. Since $n\in\bbN$, either $m=\sqrt{\frac{2n^{2}+n}{3}}$ is integer 
or both $A_{n}$ and $B_{n}$  are irrational. Thus we have the Diophantine equation 
\begin{eqnarray}\label{eqnm}
2n^{2}+n=3m^{2}
\end{eqnarray}
with positive integer $n,m$. There are two evident solutions: $n=m=0$ and $n=m=1$. They correspond 
to the harmonic polynomials $\bar\ze^{2}$ and $\bar\ze(\ze\bar\ze-4z^{2})$ relating to the cases $d=0$
and $d=2$, respectively. We do not take them in account. Thus we assume $$n>1.$$  
Then the converse is also true: {\it for every solution $(n,m)\in(\bbN\setminus\{1\})\times\bbN$ 
to (\ref{eqnm}) we have $\deg f_{n,4}=n+2$ and $\De f_{n,4}=0$.} 

\subsection{Description of $\frR_{4}$} Let the sequence $\al_{k}$ be defined by 
the recurrence relation 
\begin{eqnarray}\label{recrel6}
X_{k+1}=10X_{k}-X_{k-1}
\end{eqnarray}
and the initial data $\al_{0}=\al_{1}=1$. Set 
\begin{eqnarray}\label{aknk4}
\begin{array}{rcl}
a_{k}&=&\frac12(\al_{k}-1),\\
n_{k}&=&\frac18(\al_{k}+\al_{k+1}-2). 
\end{array}
\end{eqnarray}
Note that $n_{k}=\frac14(a_{k}+a_{k+1})$. We shall prove that $a_{k}=A_{n_{k}}$ in the notation  
of (\ref{ABnm4}). These numbers are integer. There is a short table at the end of the section. 
\begin{theorem}
The family $\frR_{4}$ consists of the polynomials 
\begin{eqnarray*}
f_{n_{k},4}(x,y,z)=(x+yi)^{n_{k}-2}(x^{2}+y^{2}-a_{k}z^{2})(x^{2}+y^{2}-a_{k+1}z^{2}),
\end{eqnarray*}
where $a_{k}$ and $n_{k}$ are as above and  $k>1$.  
\end{theorem}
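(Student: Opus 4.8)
The plan is to convert the reducibility condition into the Pell equation $u^2-6v^2=1$ and match its solution set with the sequence $\al_k$. By the discussion preceding the theorem, $f_{n,4}\in\frR_4$ exactly when \eqref{eqnm} has a solution $(n,m)\in(\bbN\setminus\{1\})\times\bbN$, and in that case $A_n=2(n-m)$, $B_n=2(n+m)$ with $A_n$ a nonnegative integer. So the first step is to rewrite $2n^2+n=3m^2$ as a Pell equation. Multiplying by $8$ and completing the square gives $(4n+1)^2-1=24m^2$, i.e. $(4n+1)^2-6(2m)^2=1$. Setting $u=4n+1$ and $v=2m$, this is precisely $u^2-6v^2=1$ with the extra congruence constraints $u\equiv1\pmod4$ and $v\equiv0\pmod2$; conversely any solution of the Pell equation with these congruences yields an integer $n=\frac{u-1}4$ and $m=\frac v2$ solving \eqref{eqnm}.

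Next I would invoke the structure of $\frS^+$ from \S\ref{pellintro}. The fundamental solution of $u^2-6v^2=1$ is $(u_1,v_1)=(5,2)$, so $M=\left(\begin{smallmatrix}5&12\\2&5\end{smallmatrix}\right)$ and every solution is $\phi_k=M^k\phi_0$, $\phi_0=(1,0)$. The $u$-coordinates $u_k$ thus satisfy $u_{k+1}=10u_k-u_{k-1}$ with $u_0=1$, $u_1=5$ — which is exactly the recurrence \eqref{recrel6}. So I would identify $\al_k$ not with $u_k$ directly but check the initial data: the sequence in the theorem has $\al_0=\al_1=1$, whereas $u_0=1,u_1=5$. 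The resolution is that $\al_k$ is the subsequence (or a shifted/auxiliary sequence) tracking the solutions that satisfy the congruences $u\equiv1\pmod4$, $v$ even. One checks mod $4$: along $u_k$ the residues cycle, and the admissible ones (giving integer $n$) occur at every other index or with a period, so that the admissible $u$-values themselves satisfy a recurrence of the form $X_{k+1}=cX_k-X_{k-1}$; computing $c$ from $\la_1^2+\la_2^2$ where $\la_i$ are eigenvalues of $M$ (note $\la_1\la_2=1$, $\la_1+\la_2=10$, so $\la_1^2+\la_2^2=98$ — too big) versus taking the right sub-step shows $c=10$ again with corrected initial data $1,1$. Concretely: the admissible solutions start at $\phi_0=(1,0)$ itself (giving $n=0$, discarded) and the next admissible one, and I would verify by direct computation of the first few $M^k\phi_0$ which indices survive the congruence filter, thereby pinning down that $\al_k$ lists the admissible $u$-values with $\al_0=\al_1=1$ the degenerate seed.

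Then the bookkeeping is routine: with $\al_k$ the admissible $u$-sequence, $n_k=\frac{\al_k-1}4$ should be the running list of valid $n$, and $m_k=\frac{v_k}2$ the corresponding $m$; from $A_{n_k}=2(n_k-m_k)$ one computes $a_k=\frac12(\al_k-1)$ after expressing $m_k$ via neighbouring $\al$'s using the Pell relation. The identity $n_k=\frac14(a_k+a_{k+1})$, equivalently $n_k=\frac18(\al_k+\al_{k+1}-2)$, is then a consequence of $A_{n}+B_{n}=4n$ together with $B_{n_k}=2(n_k+m_k)=a_{k+1}$ — i.e. the larger root at step $k$ equals the smaller root at step $k+1$, which is the telescoping that makes the two quadratic factors of consecutive $f_{n_k,4}$ share a factor. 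I would prove $a_k=A_{n_k}$ and $a_{k+1}=B_{n_k}$ by induction using \eqref{recrel6}, and check integrality of $a_k,n_k$ from the mod-$8$ behaviour of $\al_k$. Finally, the range $k>1$ excludes the degenerate seeds $\al_0=\al_1=1$ (giving $n=0$) and the value giving $n=1$ (the $d=2$ case already set aside), matching the standing assumption $n>1$.

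\textbf{Main obstacle.} The one genuinely delicate point is the congruence analysis: showing that exactly the solutions with $u\equiv1\pmod4$ (equivalently $v$ even) are the relevant ones and that \emph{all} of them are captured, so that the admissible subsequence again obeys $X_{k+1}=10X_k-X_{k-1}$ with the stated initial data — and that no solutions are missed and none spurious are included. Everything after that is a finite induction with the recurrence \eqref{recrel6}.
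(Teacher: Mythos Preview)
Your overall strategy---reduce \eqref{eqnm} to the Pell equation $u^2-6v^2=1$ via $u=4n+1$, $v=2m$, and then read off the structure of solutions---is exactly the paper's. But you have misidentified what $\al_k$ is, and this leads you to invent a congruence obstacle that does not exist.

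First, the ``main obstacle'' you flag is empty. A one-line induction with the recurrence $u_{k+1}=5u_k+12v_k$, $v_{k+1}=2u_k+5v_k$ (starting from $(u_0,v_0)=(1,0)$) shows that \emph{every} $(u_k,v_k)\in\frS^{+}$ satisfies $u_k\equiv1\pmod4$ and $v_k\equiv0\pmod2$. So there is no filter: all Pell solutions correspond to integer $(n_k,m_k)=\bigl(\tfrac{u_k-1}{4},\tfrac{v_k}{2}\bigr)$, and the correspondence with solutions of \eqref{eqnm} is bijective.

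Second, $\al_k$ is \emph{not} a subsequence of the $u_k$. The paper sets $\al_k=u_k-2v_k$; since both $u_k$ and $v_k$ satisfy \eqref{recrel6}, so does this linear combination, and the initial data are $\al_0=1-0=1$, $\al_1=5-4=1$, resolving the mismatch you noticed. One then checks that the shifted sequence satisfies $\al_{k+1}=u_k+2v_k$ (same recurrence, initial data $1$ and $9$), whence $\al_k+\al_{k+1}=2u_k$ and $\al_{k+1}-\al_k=4v_k$. This gives directly $n_k=\tfrac18(\al_k+\al_{k+1}-2)$, $m_k=\tfrac18(\al_{k+1}-\al_k)$, and then $A_{n_k}=2(n_k-m_k)=\tfrac12(\al_k-1)=a_k$ and $B_{n_k}=2(n_k+m_k)=\tfrac12(\al_{k+1}-1)=a_{k+1}$. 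Your formula $n_k=\tfrac{\al_k-1}{4}$ is wrong (it would give $n_2=2$, not $12$), and the detour through congruence classes and $\la_1^2+\la_2^2=98$ is unnecessary.

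Once you fix the meaning of $\al_k$, the rest of your plan (the telescoping $B_{n_k}=a_{k+1}=A_{n_{k+1}}$ and the exclusion of $k\le1$) goes through exactly as you describe and matches the paper.
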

\begin{corollary}
The consecutive polynomials $f_{n_{k},4}$ and $f_{n_{k+1},4}$ have a common quadratic 
factor.\qed
\end{corollary}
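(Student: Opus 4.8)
The plan is to read the common factor directly off the explicit factorizations furnished by the Theorem, exploiting the fact that the two quadratic factors of $f_{n_k,4}$ are indexed by the \emph{consecutive} terms $a_k$ and $a_{k+1}$. By the Theorem,
\[
f_{n_k,4}=\ze^{n_k-2}(\ze\bar\ze-a_k z^2)(\ze\bar\ze-a_{k+1}z^2),
\]
so the quadratic factors of $f_{n_k,4}$ correspond to the index window $\{a_k,a_{k+1}\}$. Applying the Theorem with $k$ replaced by $k+1$ gives
\[
f_{n_{k+1},4}=\ze^{n_{k+1}-2}(\ze\bar\ze-a_{k+1}z^2)(\ze\bar\ze-a_{k+2}z^2),
\]
whose quadratic factors correspond to the window $\{a_{k+1},a_{k+2}\}$. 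These two windows overlap in their shared endpoint $a_{k+1}$, so both polynomials carry the factor
\[
\ze\bar\ze-a_{k+1}z^2=x^2+y^2-a_{k+1}z^2,
\]
which is the asserted common quadratic factor.

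It is worth recording the arithmetic meaning of this overlap, which I would state explicitly. From $n_k=\frac14(a_k+a_{k+1})$ we get $a_k+a_{k+1}=4n_k=A_{n_k}+B_{n_k}$, and since $a_k=A_{n_k}$ by the Theorem, it follows that $a_{k+1}=B_{n_k}$. On the other hand the Theorem at index $k+1$ gives $a_{k+1}=A_{n_{k+1}}$. Hence the shared factor $\ze\bar\ze-a_{k+1}z^2$ is the \emph{larger-root} factor $(\ze\bar\ze-B_{n_k}z^2)$ of $f_{n_k,4}$ and simultaneously the \emph{smaller-root} factor $(\ze\bar\ze-A_{n_{k+1}}z^2)$ of $f_{n_{k+1},4}$; geometrically, the two consecutive harmonic polynomials vanish on the common round cone $x^2+y^2=a_{k+1}z^2$.

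The only thing left to check is that this factor is genuinely quadratic, i.e.\ nonconstant, which amounts to $a_{k+1}\neq0$. This is immediate: from $a_{k+1}=\frac12(\al_{k+1}-1)$ and the recurrence $X_{k+1}=10X_k-X_{k-1}$ with $\al_0=\al_1=1$, the sequence $\al_k$ is strictly increasing for $k\geq1$, so $a_{k+1}>0$ for all $k>1$. I do not expect any genuine obstacle here: the corollary is an immediate bookkeeping consequence of the telescoping indexing already built into the factorization of the Theorem, and the entire argument reduces to observing that the index windows $\{a_k,a_{k+1}\}$ and $\{a_{k+1},a_{k+2}\}$ share exactly one endpoint.
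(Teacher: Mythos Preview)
Your argument is correct and is exactly the intended one: the paper marks the corollary with \qed\ because it is immediate from the explicit factorization in the Theorem, and you have simply spelled out that the index windows $\{a_k,a_{k+1}\}$ and $\{a_{k+1},a_{k+2}\}$ overlap at $a_{k+1}$. The additional remarks on $a_{k+1}=B_{n_k}=A_{n_{k+1}}$ and on $a_{k+1}>0$ are fine elaborations but not strictly needed.
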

\begin{proof}[Proof of the theorem]
Setting
\begin{eqnarray}\label{changenm}
\begin{array}{c}
u=4n+1,\\ v=2m,
\end{array} 
\end{eqnarray}
in (\ref{eqnm}) we get the Pell equation 
\begin{eqnarray}\label{equv}
u^{2}-6v^{2}=1. 
\end{eqnarray}
Its fundamental solution $\phi_{1}$ is equal to $(5,2)$. We get the recurrence relations  
\begin{eqnarray*}
\begin{cases}
u_{k+1}=5u_{k}+12v_{k},\\ 
v_{k+1}=2u_{k}+5v_{k} 
\end{cases}
\end{eqnarray*}
for solutions to (\ref{equv}) and (\ref{recrel6}) for the linear combinations of $u_{k}$ and $v_{k}$.  
In particular, $\phi_{2}=(49,20)$. This provides the initial data for (\ref{recrel6}). 
It follows from (\ref{recrel6})  that 
\begin{eqnarray}\label{vnmu4}
\begin{array}{rcl}
n_{k}&=&\frac14({u_{k}-1}),\\ 
m_{k}&=&\frac12{v_{k}}
\end{array}
\end{eqnarray}
are integer. Hence the formulas (\ref{changenm}) define one-to-one correspondence between 
$\frS^{+}$ and the set of solutions to (\ref{eqnm}) in positive integers. By (\ref{ABnm4}), 
\begin{eqnarray}\label{nmAB4}
\begin{array}{rcl}
A_{n_{k}}&=&2(n_{k}-m_{k}),\\
B_{n_{k}}&=&2(n_{k}+m_{k}) 
\end{array}
\end{eqnarray}
and the polynomials $f_{n_{k},4}$ are harmonic. Thus we have to prove that 
\begin{eqnarray}
\begin{array}{rcl}\label{aAbB4}
a_{k}&=&A_{n_{k}},\\
a_{k+1}&=&B_{n_{k}}. 
\end{array}
\end{eqnarray}

The initial data for $u_{k},v_{k}$ are $u_{0}=1$, $u_{1}=5$ and $v_{0}=0$, $v_{1}=2$. 
Hence  $\al_{0}=u_{0}-2v_{0}$ and $\al_{1}=u_{1}-2v_{1}$.  Due to (\ref{recrel6}) this implies  
\begin{eqnarray*}
\al_{k}=u_{k}-2v_{k}. 
\end{eqnarray*}
for all $k$. 
The sequence $\al_{k+1}$ also satisfies  (\ref{recrel6}) and the initial conditions $\al_{1}=u_{0}+2v_{0}=1$ ,
$\al_{2}=u_{1}+2v_{1}=9$. Therefore, 
\begin{eqnarray*}
\al_{k+1}=u_{k}+2v_{k}. 
\end{eqnarray*}
Hence $\al_{k}+\al_{k+1}=2u_{k}$ and $\al_{k+1}-\al_{k}=4v_{k}$. By (\ref{vnmu4}), 
\begin{eqnarray*}
&n_{k}=\frac18(\al_{k}+\al_{k+1}-2),\\ 
&m_{k}=\frac18(\al_{k+1}-\al_{k}). 
\end{eqnarray*}
It follows that 
\begin{eqnarray*}
&A_{n_{k}}=2(n_{k}-m_{k})=\frac12(\al_{k}-1),\\ 
&B_{n_{k}}=2(n_{k}+m_{k})=\frac12(\al_{k+1}-1). 
\end{eqnarray*}
This proves (\ref{aAbB4}).
\end{proof}
\subsection{Table and formulas}
Here are the first six terms of the above quantities. We drop the cases $k=0$ and $k=1$ since they 
do not relate to harmonic polynomials of the degree $n_{k}+2$.   
\begin{center}\small
\begin{tabular}{cccccccc}
$k$&2&3&4&5&6&7\\
$u_{k}$&49&485&4801&47525&470449&4656965 \\
$v_{k.}$&20&198&1960&19402&192060&1901198\\
$n_{k}$&12&121&1200&11881&117612&1164241 \\
$m_{k.}$&10&99&980&9701&96030&950599\\
$a_{k}$&4&44&440&4360&43164&427284 \
\end{tabular}
\end{center}  
The matrix $M$ has eigenvalues $5\pm2\sqrt6$. Here are the explicit formulas for $u_{k}$ and $v_{k}$:
\begin{eqnarray*}
\begin{array}{rcl}
u_{k}&=&\frac{(5+2\sqrt6)^{k}+(5-2\sqrt6)^{k}}{2},\\
v_{k}&=&\frac{(5+2\sqrt6)^{k}-(5-2\sqrt6)^{k}}{2\sqrt6}.
\end{array}
\end{eqnarray*}
The formulas for their generating functions follow: 
\begin{eqnarray*}
\begin{array}{c}
G_{u}(t)=\frac{1-5t}{t^{2}-10t+1},\\ 
G_{v}(t)=\frac{2t}{t^{2}-10t+1}.
\end{array}
\end{eqnarray*}
For $\al_{k}$ and $m_{k}$ we have the functions $G_{u}-2G_{v}$ and $\frac12 G_{v}$, respectively.
Due to  (\ref{vnmu4}) and (\ref{aknk4}), 
\begin{eqnarray*}
&G_{n}(t)=\frac14(G_{u}(t)-\frac1{1-t})=\frac{t(1+t)}{(1-t)(1-10t+t^{2})},\\
&G_{a}(t)=\frac12(G_{u}(t)-2G_{v}(t)-\frac1{1-t})=\frac{4t^{2}}{(1-t)(1-10t+t^{2})}.
\end{eqnarray*}
\section{Reducibility of $p_{n,5}$ and a Pell type equation}
In this section we consider the harmonic polynomials of the type 
\begin{eqnarray}\label{poly5}
f_{n,5}
=\ze^{n-2}z (\ze\bar\ze - A_{n}z^2) (\ze\bar\ze - B_{n}z^2).
\end{eqnarray}
with rational $A_{n},B_{n}$. The same method works with some complications. 
\subsection{The Diophantine equation} 
It is convenient to formulate precisely the reduction to the Diophantine equation. 
\begin{lemma}\label{ABharm}
Suppose $A_{n},B_{n}\in\bbQ$. The polynomial $f_{n,5}$ is harmonic of degree $n+3$ if and only if 
\begin{eqnarray}\label{ABnm}
\begin{cases}
A_{n}=\frac{2}{3}(n-m),\\
B_{n}=\frac{2}{3}(n+m),
\end{cases}
\end{eqnarray}
where $n,m\in\bbN\setminus\{1\}$ satisfy the Diophantine equation 
\begin{eqnarray}\label{2n3n5m}
3 n + 2 n^2=5m^{2}.
\end{eqnarray} 
\end{lemma}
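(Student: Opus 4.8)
The plan is to compute the Laplacian of $f_{n,5}$ directly and read off the conditions on $A_n$, $B_n$. Write $\ze=x+yi$, $\bar\ze=x-yi$, so that $\ze\bar\ze=x^2+y^2$ and the Laplacian acts as $\De = 4\,\partial_\ze\partial_{\bar\ze} + \partial_z^2$. Expanding the product, $f_{n,5}=\ze^{n-2}z\bigl(\ze^2\bar\ze^2 - (A_n+B_n)\ze\bar\ze z^2 + A_nB_n z^4\bigr)$, which is a sum of three monomials of the form $c\,\ze^{a}\bar\ze^{b}z^{c}$. On such a monomial $\De$ acts by $4ab\,\ze^{a-1}\bar\ze^{b-1}z^{c} + c(c-1)\,\ze^{a}\bar\ze^{b}z^{c-2}$, so the computation is entirely mechanical: one collects the terms of $\De f_{n,5}$ into a common shape $\ze^{n-2}z\,(\text{quadratic in }\ze\bar\ze,z^2)$ and sets the coefficient of each independent monomial to zero. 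This produces two linear/quadratic relations in $A_n,B_n$; by analogy with the $d=4$ case worked out just above (where the identical procedure gave $A_n+B_n=4n$, $A_nB_n=\tfrac43 n(n-1)$), here one expects $A_n+B_n=\tfrac{4}{3}n$ and $A_nB_n=\tfrac{4}{9}n(2n+\text{const})$ or similar — the exact constants come out of the $z$-power bookkeeping and the shift $z$ in front.

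Next I would solve that $2\times2$ system. Since $A_n,B_n$ are the roots of $X^2 - (A_n+B_n)X + A_nB_n=0$, the quadratic formula gives $A_n,B_n = \tfrac13\bigl(\text{(sum)} \mp \sqrt{(\text{sum})^2-4(\text{product})}\bigr)$, and plugging in the values of sum and product the discriminant simplifies to a constant multiple of $2n^2+3n$ up to squares; matching the stated form (\ref{ABnm}) forces the radicand to equal $m^2$ with $m=\sqrt{\tfrac15(2n^2+3n)}$ after clearing the rational factor, which is exactly equation (\ref{2n3n5m}). The rationality hypothesis $A_n,B_n\in\bbQ$ is what allows the step ``the square root is rational $\Rightarrow$ it is an integer'': since $5m^2 = n(2n+3)$ with $n$ integer, $5m^2\in\bbZ$, and a rational whose square is an integer is an integer, so $m\in\bbZ$, and we may take $m\geq0$. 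Conversely, given any integer solution $(n,m)$ of (\ref{2n3n5m}) with $n>1$, the formulas (\ref{ABnm}) define rational $A_n,B_n$, and retracing the Laplacian computation shows $\De f_{n,5}=0$; the degree is $n+3$ because $A_n\ne0$ (as $n>1$ forces $m<n$, hence $A_n=\tfrac23(n-m)>0$) so no cancellation of the top term occurs.

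The one point needing a little care — and the closest thing to an obstacle — is the exclusion of $n=1$ (and $m=1$): for small $n$ one of the quadratic factors may degenerate or the "degree $n+3$" claim may fail, exactly as the $d=4$ discussion discards $n=m=0$ and $n=m=1$ as belonging to lower $d$. I would check directly that among small solutions of (\ref{2n3n5m}) only $n=1$ (giving $5m^2=5$, $m=1$) is spurious, note that it corresponds to a polynomial of smaller effective degree, and state the lemma for $n,m\in\bbN\setminus\{1\}$ accordingly. Everything else is a routine verification; no deep input beyond the elementary fact about rational square roots is required at this stage, the Diophantine analysis proper being deferred to the description of $\frR_5$.
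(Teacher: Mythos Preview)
Your approach is essentially the paper's own: compute $\De f_{n,5}$, extract the two symmetric relations on $A_n,B_n$, solve the quadratic, and convert the rationality of the root into the Diophantine condition. The paper's actual relations are $3(A_n+B_n)=4n$ and $5A_nB_n=(n-1)(A_n+B_n)$, giving $A_n,B_n=\tfrac23\bigl(n\mp\sqrt{(2n^2+3n)/5}\bigr)$, which matches what you anticipated. One small slip: you invoke ``a rational whose square is an integer is an integer'', but what you actually have is $5m^{2}\in\bbZ$, not $m^{2}\in\bbZ$; the conclusion $m\in\bbZ$ still holds (write $m=p/q$ in lowest terms and note $q^{2}\mid 5$), but the sentence as written does not quite justify it. The exclusion of $n=0,1$ is handled in the paper exactly as you suggest, by exhibiting $f_{0,5}$ and $f_{1,5}$ explicitly.
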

\begin{proof}
Computing $\De f_{n,5}$, we get  
\begin{eqnarray*}
\begin{cases}
3(A_{n}+B_{n})=4n,\\
5 A_{n}B_{n}=(n-1)(A_{n} + B_{n}).
\end{cases}
\end{eqnarray*}
Assuming $A_{n}\leq B_{n}$, we write the solution as 
\begin{eqnarray*}
\begin{array}{c}
A_{n}= \frac23\left(n - \sqrt{\frac{3 n + 2 n^{2}}{5}}\right),\\
B_{n}= \frac23\left(n + \sqrt{\frac{3 n + 2 n^2}{5}}\right).
\end{array}
\end{eqnarray*}
We assume $n\geq2$ since $f_{0,5}=\bar\ze^{2}z$ and $f_{1,5}=\bar\ze z(\ze\bar\ze-\frac43 z^{2})$. 
Set 
\begin{eqnarray*}
m=\sqrt{\frac{3 n + 2 n^2}{5}}. 
\end{eqnarray*}
Clearly, $m>1$ and $m$ can be either integer or irrational. The assumption that $A_{n}$ or $B_{n}$ is rational  
implies that $m\in\bbZ$ and $n,m$ satisfies (\ref{2n3n5m}). 
\end{proof}
\subsection{The Pell type equation}
For $u,v$ defined as 
\begin{eqnarray}
\begin{cases}\label{uvtonm}
u=4n+3,\\ 
v=2m,  
\end{cases}
\end{eqnarray}
the equation (\ref{2n3n5m}) implies the equality  
\begin{eqnarray}\label{pell9}
u^2 - 10 v^2 = 9. 
\end{eqnarray}
Every solution to the Pell equation 
\begin{eqnarray}\label{pell1}
u^{2}-10v^{2}=1
\end{eqnarray}
after multiplication by 3 satisfies (\ref{pell9}). The fundamental solution to (\ref{pell1}) is $(19,6)$. 
Therefore,  
\begin{eqnarray*}
M=\left(\begin{array}{cc}19&60\\6&19\end{array}\right).
\end{eqnarray*}
We consider the solutions $(u,v)$ to (\ref{pell9}) with $u>0$ and use the notation of 
subsection~\ref{pellintro} of Introduction. Set 
\begin{eqnarray*}
&T_{0}=
\{(7,-2),\,(3,0),\,(7,2)\}.
\end{eqnarray*}
\begin{lemma}\label{threeser}
The family $\frS$ of the solutions to the Diophantine  equation (\ref{pell9}) lying in the right halfplane is the 
union of the orbits of the vectors of the triple $T_{0}$ under the action of $\Ga$. 
\end{lemma}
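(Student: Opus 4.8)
The plan is to analyze the orbit structure of $\Ga$ on $\frS$ using the standard fundamental-domain argument already outlined in subsection~\ref{pellintro}. First I would recall that the group $\Ga$ generated by $M$ acts on the branch $B$ of the conic $u^{2}-10v^{2}=9$ lying in the right halfplane $\{u>0\}$; since $M$ has eigenvalues $\la_{1,2}=19\pm6\sqrt{10}$ with $\la_{1}>1>\la_{2}>0$, each $\Ga$-orbit on $B$ meets any fixed fundamental arc exactly once. Concretely, if $\phi=(u,v)\in\frS$ then there is a unique integer $k$ such that $M^{-k}\phi$ lies in the half-open arc $\cA$ of $B$ between $\phi_{0}=(3,0)$ and $M\phi_{0}$. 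So it suffices to show that the only points of $\frS$ in $\cA$ (together with its left endpoint, handled separately) are exactly the three vectors in $T_{0}$.

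The key computation is therefore to compute $M\phi$ for each $\phi\in T_{0}$ and to check that the resulting arc is exactly covered. Applying $M=\left(\begin{smallmatrix}19&60\\6&19\end{smallmatrix}\right)$ gives $M(7,-2)=(13,4)$, $M(3,0)=(57,18)$, $M(7,2)=(253,80)$, and one checks these again solve $u^{2}-10v^{2}=9$. Ordering the elements of $\frS$ along $B$ by the value of $v$ (equivalently by $u+v\sqrt{10}$), I would verify that in the window $v\in\{-2,\dots,17\}$ — i.e.\ on the arc from $(7,-2)$ up to but not including $M(3,0)=(57,18)$ — the only solutions are $(7,-2),(3,0),(7,2),(13,4)$, and that $(13,4)=M(7,-2)$, $(57,18)=M(3,0)$, $(253,80)=M(7,2)$ are the next occurrences. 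This is a finite check: for each $v$ in that range one asks whether $9+10v^{2}$ is a perfect square, and there are only finitely many cases. Hence the three orbits $\Ga\cdot(7,-2)$, $\Ga\cdot(3,0)$, $\Ga\cdot(7,2)$ are pairwise disjoint (their representatives lie in distinct spots of one fundamental arc) and exhaust $\frS$.

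An alternative, perhaps cleaner, route avoids choosing a fundamental arc explicitly. Every solution of $u^{2}-10v^{2}=9$ in the right halfplane corresponds to an element $u+v\sqrt{10}$ of norm $9$ in $\bbZ[\sqrt{10}]$ lying in the positive cone, and multiplication by the fundamental unit $\eps=19+6\sqrt{10}$ (of norm $1$) realizes the $\Ga$-action. The classes of norm-$9$ elements modulo the unit group $\langle\eps\rangle$ are finite in number, and one exhibits representatives by searching the fundamental region $1\le (u+v\sqrt{10})\,\eps^{-1/2}<\eps^{1/2}$ (roughly $|u+v\sqrt{10}|$ between $3\eps^{-1/2}$ and $3\eps^{1/2}$). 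That search turns up precisely the three values $3,\ 7\pm2\sqrt{10}$ up to sign and unit, which is the content of $T_{0}$; note $7+2\sqrt{10}$ and $7-2\sqrt{10}$ are genuinely inequivalent because their ratio is not a unit, while their product is $9$, reflecting the symmetry $v\mapsto-v$.

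The main obstacle is purely bookkeeping: one must be certain the finite search range is correct and complete, so that no fourth orbit is missed and no two of the listed orbits secretly coincide. Establishing disjointness amounts to checking that $(7+2\sqrt{10})/(7-2\sqrt{10})$ and $(7\pm2\sqrt{10})/3$ are not powers of $\eps$, which follows by a size/sign argument since all these ratios have absolute value strictly between $\eps^{-1}$ and $\eps$ (indeed between $\eps^{-1}$ and $1$ or between $1$ and $\eps$) yet are not equal to $1$. Once that is pinned down, Lemma~\ref{threeser} follows immediately from transitivity of $\Ga$ on each orbit together with the exhaustion statement.
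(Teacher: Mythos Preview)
Your argument is correct and follows the same fundamental-domain strategy as the paper. The only difference is efficiency: the paper takes the arc from $(7,-2)$ to $M(7,-2)=(13,4)$ as the fundamental domain, so the finite check reduces to verifying that $9+10v^{2}$ is a perfect square for $v\in\{-2,\dots,3\}$ only in the three cases of $T_{0}$, whereas your window $v\in\{-2,\dots,17\}$ is larger than necessary (and mixes two choices of base point), though still valid once you observe $(13,4)=M(7,-2)$.
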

\begin{proof}
The hyperbola $u^{2}-10u^{2}=9$ contains the family of solutions to (\ref{pell9}). 
It is clear that its branch $B$ lying in the right  halfplane contains the above mentioned vectors, the group $\Ga$ 
generated by $M$ preserves $B$ and the lattice $\bbZ^{2}$ and consequently their orbits. Thus we have to 
prove that the orbits are disjoint and their union exhaust the family of solutions. 

The equality $M(7,-2)^{\tsfT}=(13,4)^{\tsfT}$ shows that 
the arc in $B$ with endpoints $(7,-2)$ and $(13,4)$ is the fundamental domain for the action of $\Ga$ on $B$
which is equivalent to the shift $t\to t+1$ in $\bbR$. The straightforward computation shows that 
$9+10v^{2}$ for $v=-2,\dots,3$ is the square of some $u\in\bbN$ only in the cases $(u,v)\in T_{0}$. 
This proves the lemma. 
\end{proof}
In the next lemma we characterize the solutions relating to (\ref{2n3n5m}) among all solutions to (\ref{pell9}). 
Let $\frS^{+}$ denote the family of solutions to (\ref{pell9}) which correspond to that of (\ref{2n3n5m}) 
via (\ref{uvtonm}) with integer $n\geq2$ and set 
\begin{eqnarray*}
&T_{k}=M^{2k}T_{0}.
\end{eqnarray*}
We have 
\begin{eqnarray*}
&M^{2}=\left(\begin{array}{cc}721&2280\\228&721\end{array}\right),\\
&T_{1}=\{(487, 154),~ (2163, 684),~(9607, 3038)\}.
\end{eqnarray*}
\begin{lemma}\label{nmvsuv}
The set $\frS^{+}$ is the union of the triples $T_{k}$, $k\in\bbN$. 
\end{lemma}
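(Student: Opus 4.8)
The plan is to characterize, among all solutions $(u,v)\in\frS$ described in Lemma~\ref{threeser}, exactly those that arise from an integer $n\ge2$ via the substitution (\ref{uvtonm}), i.e. those with $u\equiv3\pmod4$ and $v$ even (the condition $n\ge2$ will be handled at the end, as it only excludes the small vectors of $T_0$). Since every solution in $\frS$ lies on a single orbit $\Ga\cdot\psi$ with $\psi\in T_0$, and $\Ga=\langle M\rangle$, the question reduces to tracking the residues of $u_j,v_j$ modulo~$4$ along the sequence $\phi_j=M^j\psi$. First I would reduce the matrix $M=\left(\begin{smallmatrix}19&60\\6&19\end{smallmatrix}\right)$ modulo~$4$, obtaining $\bar M=\left(\begin{smallmatrix}3&0\\2&3\end{smallmatrix}\right)$, and compute $\bar M^2=\left(\begin{smallmatrix}1&0\\0&1\end{smallmatrix}\right)\pmod4$ (indeed $3^2=9\equiv1$, $2\cdot3+3\cdot2=12\equiv0$, $2\cdot0+3\cdot3=9\equiv1$). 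Thus $M$ has order $2$ modulo~$4$, so the residue class of $\phi_j$ mod~$4$ depends only on the parity of~$j$.

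Next I would compute, for each of the three base vectors $\psi\in T_0=\{(7,-2),(3,0),(7,2)\}$, the residue of $\psi$ and of $M\psi$ modulo~$4$, and check which of the two parities of~$j$ gives $u\equiv3$, $v\equiv0\pmod4$. All three vectors of $T_0$ already satisfy $u\equiv3\pmod4$ and $v\equiv0\pmod2$ with $v\equiv2\pmod4$ for $(7,\pm2)$ and $v\equiv0\pmod4$ for $(3,0)$; in all cases the congruence system needed for (\ref{uvtonm}) to yield an integer $n$ (namely $u\equiv3\pmod4$) holds. One then checks $M\psi$: since $\bar M(3,0)^{\ssT}=(9,6)^{\ssT}\equiv(1,2)^{\ssT}$ and $\bar M(7,\pm2)^{\ssT}\equiv\bar M(3,\pm2)^{\ssT}=(9\pm0,\ 6\pm6)^{\ssT}\equiv(1,0)^{\ssT}$ or $(1,2)^{\ssT}$, the odd-index vectors have $u\equiv1\pmod4$, so they do \emph{not} correspond to an integer $n$ via $n=(u-3)/4$. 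Hence $\frS^+$ consists precisely of the even-index iterates, i.e. of $M^{2k}\psi$ for $\psi\in T_0$ and $k\in\bbN$ — which is exactly $\bigcup_{k}T_k$, once we verify that $k=0$ already gives $n\ge2$ for all three vectors of $T_0$: from $(7,2)$ we get $n=1$, from $(3,0)$ we get $n=0$, from $(7,-2)$ we get $n=1$, so in fact $T_0$ itself must be examined — the genuinely admissible vectors with $n\ge2$ start at $k=1$, and I would state the lemma's conclusion accordingly, noting that the three small solutions in $T_0$ correspond to the excluded low-degree cases $f_{0,5},f_{1,5}$ just as in the $d=4$ situation.

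The main obstacle is bookkeeping rather than conceptual: one must be careful that the substitution (\ref{uvtonm}) requires \emph{both} $u\equiv3\pmod4$ and $v$ even simultaneously, and that $v$ even is automatic on all of $\frS$ (since $u^2=9+10v^2$ forces $u$ odd, whence from $u^2-9=10v^2$ and $u$ odd one gets $u^2-9\equiv0\pmod8$, so $10v^2\equiv0\pmod8$, giving $v$ even) — so the only real constraint is the mod~$4$ condition on~$u$, and the order-$2$ computation above settles it cleanly. A secondary subtlety is that one should confirm the three orbits $M^{2k}T_0$ are genuinely disjoint and that odd-index vectors are not silently reabsorbed; this follows from Lemma~\ref{threeser}, which already established that the full orbits under $\Ga$ are disjoint, so partitioning each orbit by index parity preserves disjointness. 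With these points checked, the identification $\frS^+=\bigcup_{k\in\bbN}T_k$ (with the understanding that the $n\ge2$ requirement trims $k=0$) is immediate.
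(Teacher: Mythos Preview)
Your argument is correct and follows essentially the same route as the paper: reduce $M$ modulo $4$ to see that $u$ alternates between the residues $3$ and $1$ along each $\Ga$-orbit, so exactly the even powers $M^{2k}$ applied to $T_0$ yield $u\equiv3\pmod4$, and then discard $T_0$ itself because it corresponds to $n\le1$. The only cosmetic points to tidy are the indexing convention (in the paper $\bbN$ begins at $1$, so the union over $k\in\bbN$ already omits $T_0$) and a one-line remark that negative powers $M^{-2k}T_0$ give $v<0$ and hence $m<0$, which is excluded by Lemma~\ref{ABharm}; the paper handles this implicitly via the positivity of the entries of $M^{2}$.
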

\begin{proof}
We note some properties on the action of $\Ga$.  The recurrence relation defined by $M$ implies
\begin{eqnarray*}
\begin{cases}
u_{k+1}=-u_{k},\\
v_{k+1}=2u_{k}-v_{k}
\end{cases}
\mod 4.
\end{eqnarray*}
It is obvious that 
\begin{itemize} 
\item [$\circ$] if $u_{1}$ is odd and $v_{1}$ is even, then the numbers $u_{k}$ of the type $4n+3$ 
and $4n+1$ alternate 
and  $v_{k}$ remains even, 
\item[$\circ$] the first components of vectors in $T_{0}$ are of the type $4n+3$
and the second ones are even, 
\item[$\circ$] $T_{0}\cap\frS^{+}=\varnothing$ since $n\geq2$ implies $4n+3\geq11$.  
\end{itemize}
Using Lemma~\ref{ABharm}, it is easy to check the inclusion $T_{1}\subseteq\frS^{+}$. Positivity of the
entries of $M^{2}$ implies  $T_{k}\subseteq\frS^{+}$ for all $k\in\bbN$. The inverse inclusion follows from 
Lemma~\ref{threeser}. The remaining assertion is evident. 
\end{proof}
\subsection{Description of $\frR_{5}$} 
In the theorem below, we use the indices $k,\eps$ for the entries of $\frS^{+}$, where $k$ stands for the number 
of the triple and $\eps$ indicates the orbit of $\Ga$. The index $\eps$ takes values $-1,0,1$ in the order of 
the representatives of the orbits in the triple $T_{0}$. For example, $v_{k,1}$ denotes the second component 
of the third vector in the triple $T_{k}$.  Note that the indices agree with the natural order on both components. 
\begin{theorem}\label{descr5}
Let the polynomial $f_{n,5}$ defined by (\ref{poly5}) be harmonic and let $A_{n}$ or $B_{n}$ be rational. Then 
there are  $k\in\bbN$ and $\eps\in\{-1,0,1\}$ such that 
\begin{eqnarray}\label{nks34}
&n=\frac14(u_{k,\eps}-3)
\end{eqnarray}
and, if $A_{n}\leq B_{n}$, 
\begin{eqnarray}\label{ABnks}
\begin{array}{c}
A_{n}=\frac16(u_{k,\eps}-2v_{k,\eps}-3),\\[4pt]
B_{n}=\frac16(u_{k,\eps}+2v_{k,\eps}-3), 
\end{array}
\end{eqnarray} 
Conversely, the above formulas 
and (\ref{poly5}) define a harmonic polynomial  for every $k\in\bbN$ and 
$\eps\in\{-1,0,1\}$.  
\end{theorem}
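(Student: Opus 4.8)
The plan is to mirror the structure of the proof of the degree-4 theorem, but with the three-orbit decomposition of $\frS$ from Lemma~\ref{threeser} carried along throughout. First I would record the two directions I must establish. The ``only if'' direction is essentially already in hand: if $f_{n,5}$ is harmonic with $A_{n}$ or $B_{n}$ rational, then Lemma~\ref{ABharm} gives an integer solution $(n,m)$, $n\ge 2$, of (\ref{2n3n5m}); the substitution (\ref{uvtonm}) produces a solution $(u,v)=(4n+3,2m)$ of (\ref{pell9}) with $u>0$; and Lemma~\ref{nmvsuv} says this solution lies in some triple $T_{k}$, i.e. $(u,v)=(u_{k,\eps},v_{k,\eps})$ for some $k\in\bbN$, $\eps\in\{-1,0,1\}$. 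Inverting (\ref{uvtonm}) gives $n=\frac14(u_{k,\eps}-3)$ and $m=\frac12 v_{k,\eps}$, and substituting these into (\ref{ABnm}) yields exactly (\ref{nks34})--(\ref{ABnks}). So the forward direction reduces to citing the three lemmas and doing the trivial back-substitution.

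For the converse I would argue as follows. Fix $k\in\bbN$ and $\eps\in\{-1,0,1\}$ and set $(u,v)=(u_{k,\eps},v_{k,\eps})$. By Lemma~\ref{nmvsuv} this pair lies in $\frS^{+}$, which by definition means it corresponds via (\ref{uvtonm}) to a solution $(n,m)$ of (\ref{2n3n5m}) with integer $n\ge 2$; concretely $n=\frac14(u-3)$ and $m=\frac12 v$ are the integers in (\ref{nks34}) and in $m_{k,\eps}=\frac12 v_{k,\eps}$. Then Lemma~\ref{ABharm} (the ``if'' direction) tells us that with $A_{n}=\frac23(n-m)$, $B_{n}=\frac23(n+m)$ the polynomial $f_{n,5}=\ze^{n-2}z(\ze\bar\ze-A_{n}z^{2})(\ze\bar\ze-B_{n}z^{2})$ is harmonic of degree $n+3$. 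Finally $\frac23(n\mp m)=\frac23\big(\frac14(u-3)\mp\frac12 v\big)=\frac16(u\mp 2v-3)$, which is precisely (\ref{ABnks}); and since $u,v>0$ on the branch $B$ one has $A_{n}\le B_{n}$, so the labelling is consistent. Hence every choice of $k,\eps$ gives a genuine harmonic polynomial, as claimed.

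The one point that deserves a little care — and what I expect to be the main (minor) obstacle — is the arithmetic that $n=\frac14(u_{k,\eps}-3)$ and $m=\frac12 v_{k,\eps}$ are genuinely integers for all $k\in\bbN$ and all three values of $\eps$, not merely for the seed triple. This is where the mod-$4$ analysis inside the proof of Lemma~\ref{nmvsuv} does the work: the recurrence $u_{k+1}\equiv -u_{k}$, $v_{k+1}\equiv 2u_{k}-v_{k}\pmod 4$ shows that applying $M^{2}$ preserves ``$u\equiv 3\pmod 4$ and $v$ even'', and the vectors of $T_{1}$ (listed explicitly in the excerpt) already satisfy $u\equiv 3\pmod 4$, $v\equiv 2\pmod 4$; hence so do all $T_{k}$, $k\ge 1$. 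I would simply invoke Lemma~\ref{nmvsuv} for this rather than redo the congruence bookkeeping. Everything else is the same short back-and-forth substitution between the variables $(n,m)$ and $(u,v)$ that already appears in the degree-4 case, so the proof is essentially a bookkeeping exercise once the three preceding lemmas are in place.
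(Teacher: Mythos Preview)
Your proposal is correct and follows essentially the same route as the paper: both directions are obtained by invoking Lemmas~\ref{ABharm}, \ref{threeser}, and \ref{nmvsuv} and performing the straightforward back-substitution between the $(n,m)$ and $(u,v)$ variables via (\ref{uvtonm}) and (\ref{ABnm}). The paper's proof is terser, but your added care about the integrality of $n=\frac14(u_{k,\eps}-3)$ and $m=\frac12 v_{k,\eps}$ is exactly what is packaged into the definition of $\frS^{+}$ in Lemma~\ref{nmvsuv}, so your instinct to cite that lemma rather than redo the congruences is the right one.
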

\begin{proof}
The family $\frR_{5}$ is described implicitly in Lemma~\ref{nmvsuv}. Thus we have to assign to the indices 
$k,\eps$ the numbers $n$, $A_{n}$ and $B_{n}$. The formulas (\ref{nks34}) and (\ref{ABnks}) do it. 
The formula (\ref{nks34}) and the equality $m=\frac12v_{k,\eps}$ follow from (\ref{uvtonm}). Substituting this 
to (\ref{ABnm}) we get  (\ref{ABnks}). It follows from Lemma~\ref{threeser}, Lemma~\ref{ABharm}, and 
Lemma~\ref{nmvsuv}, that the assumptions of the theorem hold if and only if $f_{n,5}$ is harmonic. 
\end{proof}
The table below contains the first four entries of the three series of the numbers $n=n_{k,\eps}$ defined by (\ref{nks34}). The indices $k=2,..,5$ and $\eps=-1,0,1$ enumerate columns and rows, respectively.  
\begin{eqnarray*}
\begin{array}{cccc}
121&175561&253159921&365056431601\\ 
540&779760&1124414460&1621404872640\\ 
2401&3463321&4994107561&7201499640721
\end{array}
\end{eqnarray*}

Up to the end of this section we change the notation for the polynomials $p_{n,5}\in\frR_{5}$ and the 
numbers $A_{n},B_{n}$ replacing $n$ with the corresponding $k,\eps$  in accordance with Theorem~\ref{descr5} and dropping $5$. Thus 
\begin{eqnarray*}
p_{k,\eps}=p_{n_{k,\eps},5}. 
\end{eqnarray*}
The matrix $M^{2}$ defines the following recurrence relation for the sequences $u_{k,s},v_{k,s}$:  
\begin{eqnarray}\label{rec721}
X_{k+1}=1442X_{k}-X_{k-1}.
\end{eqnarray}
The initial data for all series are contained in the triples $T_{0}$ and $T_{1}$ that are written 
above Lemma~\ref{threeser} and Lemma~\ref{nmvsuv}, respectively. 

Every $k$ correspond six numbers $A_{k,\eps},B_{k,\eps}$. In fact, there are only four 
distinct of them by the following theorem. 
\begin{theorem}\label{fourAB}
For any $k\in\bbN$ 
\begin{itemize}
\item[{\rm(\romannumeral1)}]  $A_{k,0}=B_{k,-1}$ and $B_{k,0}=A_{k,1}$,
\item [{\rm(\romannumeral2)}] the above numbers are integer, the remaining $A_{k,-1}$ and $B_{k,1}$ are not integer. 
\end{itemize} 
\end{theorem}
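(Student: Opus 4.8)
The plan is to make explicit the three series $u_{k,\eps}$, $v_{k,\eps}$ ($\eps=-1,0,1$) and then compare the quantities $A_{k,\eps}$ and $B_{k,\eps}$ as given by (\ref{ABnks}) by tracking closed forms coming from the eigenvalues $721\pm228\sqrt{10}$ of $M^{2}$, exactly as in the Pell--reminder of subsection~\ref{pellintro}. For part (\romannumeral1) the key point is to identify, at the level of the fundamental triple $T_{0}=\{(7,-2),(3,0),(7,2)\}$ and its image $T_{1}$, the linear combinations $u-2v$ and $u+2v$ that enter $A$ and $B$. Observe that for $\eps=0$ one has $u_{k,-1}+2v_{k,-1}$ and $u_{k,0}-2v_{k,0}$ satisfying the same recurrence (\ref{rec721}); I would check that they agree on the two initial indices ($k=0$: $7+2(-2)=3=3-0$; $k=1$: $487+2\cdot154=795$ versus $2163-2\cdot684=795$), which forces $u_{k,-1}+2v_{k,-1}=u_{k,0}-2v_{k,0}$ for all $k$, hence $B_{k,-1}=A_{k,0}$ via (\ref{ABnks}). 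The identity $B_{k,0}=A_{k,1}$ follows in the same way from $u_{k,0}+2v_{k,0}=u_{k,1}-2v_{k,1}$, again verified on $k=0,1$ and propagated by the recurrence. So part (\romannumeral1) reduces to two two-line base-case checks plus the uniqueness-of-solution-of-a-linear-recurrence argument.

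For part (\romannumeral2) I would argue the integrality of $A_{k,0},B_{k,0},A_{k,-1},\dots$ from the congruence bookkeeping already set up in the proof of Lemma~\ref{nmvsuv}: there we know $u_{k,\eps}$ alternates between residues $3$ and $1$ mod $4$ while $v_{k,\eps}$ stays even, and $\frS^{+}$ (the ``$\equiv3\bmod4$, integer $n\ge2$'' branch) is exactly $\bigcup_k T_k=\bigcup_k M^{2k}T_0$. The point is to refine this mod-$4$ analysis to a mod-$3$ (really mod-$6$) analysis: by Lemma~\ref{ABharm} we already know $f_{n,5}$ harmonic with $n,m$ solving (\ref{2n3n5m}) forces $A_n=\tfrac23(n-m)$, $B_n=\tfrac23(n+m)\in\bbZ$ whenever one of them is rational, i.e. $A_{k,0},B_{k,0}\in\bbZ$ and (by (\romannumeral1)) $B_{k,-1},A_{k,1}\in\bbZ$ as well. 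So the genuinely new claim is that the \emph{other} two, $A_{k,-1}=\tfrac16(u_{k,-1}-2v_{k,-1}-3)$ and $B_{k,1}=\tfrac16(u_{k,1}+2v_{k,1}-3)$, are \emph{not} integers; equivalently $u_{k,-1}-2v_{k,-1}\not\equiv3\pmod 6$ and $u_{k,1}+2v_{k,1}\not\equiv3\pmod 6$. I would compute these combinations mod $6$ using the recurrence (\ref{rec721}) (note $1442\equiv2\bmod 6$, so $X_{k+1}\equiv2X_k-X_{k-1}\bmod 6$) together with the initial values read off from $T_0,T_1$, and show the residue is never $3$; in fact the natural guess is that it is constantly $\equiv 1$ or $\equiv 5\bmod 6$ along each of these two series, which a short induction settles.

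The main obstacle I anticipate is purely bookkeeping: keeping the six sequences and their three relevant linear combinations straight, and making sure the indexing convention for $\eps$ (and the ordering $A_{k,\eps}\le B_{k,\eps}$) is applied consistently so that ``$A_{k,0}=B_{k,-1}$'' really is the correct pairing rather than, say, $A_{k,0}=A_{k,-1}$. I would therefore present the argument by first tabulating, for $k=0,1$, all six values of $u_{k,\eps}\pm2v_{k,\eps}$, then stating the two equalities of sequences in (\romannumeral1) as consequences of (\ref{rec721}) plus matching initial data, and finally doing the single mod-$6$ induction for (\romannumeral2). No deep input is needed — everything is linear algebra over $\bbZ$ and elementary congruences — but the proof will read cleanly only if the notation from Theorem~\ref{descr5} is invoked carefully. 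One should also double-check that $n_{k,\eps}\ge2$ (so that these are genuinely the harmonic polynomials of degree $n+3$, as in Lemma~\ref{ABharm}), which is immediate from the table of $n=n_{k,\eps}$ displayed after Theorem~\ref{descr5}.
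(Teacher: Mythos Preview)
Your approach to part~(\romannumeral1) is exactly the paper's: reduce $A_{k,0}=B_{k,-1}$ and $B_{k,0}=A_{k,1}$ via (\ref{ABnks}) to the identities $u_{k,-1}+2v_{k,-1}=u_{k,0}-2v_{k,0}$ and $u_{k,0}+2v_{k,0}=u_{k,1}-2v_{k,1}$, note that both sides obey the linear recurrence (\ref{rec721}), and verify the base cases $k=0,1$. Nothing to add there.

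For part~(\romannumeral2) there is a genuine gap in your integrality argument. You write that Lemma~\ref{ABharm} ``forces $A_n=\tfrac23(n-m)$, $B_n=\tfrac23(n+m)\in\bbZ$ whenever one of them is rational''. Lemma~\ref{ABharm} says no such thing: it only asserts that $A_n,B_n$ have this \emph{form} with $n,m\in\bbN$; it does not claim $3\mid(n\pm m)$. So the integrality of $A_{k,0},B_{k,0}$ (and hence, via part~(\romannumeral1), of $B_{k,-1},A_{k,1}$) is \emph{not} already known and must be proved. The fix is immediate with your own tool: the same mod~$6$ induction you set up for the non-integer cases shows $u_{k,0}\pm 2v_{k,0}\equiv 3\pmod 6$ for all $k$ (check $k=0$: $3\pm0=3$; $k=1$: $2163\mp 2\cdot684\in\{795,3531\}\equiv 3$; then propagate by $X_{k+1}\equiv 2X_k-X_{k-1}\pmod 6$). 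Your non-integrality argument for $A_{k,-1}$ and $B_{k,1}$ is correct: the relevant combinations are $\equiv 5\pmod 6$ at $k=0,1$ and this persists under the recurrence.

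For comparison, the paper handles (\romannumeral2) differently: it rewrites the $M^{2}$-recurrence directly as an affine recursion on the pairs $(A_{k,\eps},B_{k,\eps})$, namely
\[
A_{k+1,\eps}=-77A_{k,\eps}+342B_{k,\eps}+132,\qquad
B_{k+1,\eps}=-342A_{k,\eps}+1519B_{k,\eps}+588,
\]
observes that $-77\equiv1519\equiv1\pmod 3$ while the remaining coefficients are divisible by~$3$, and concludes from the initial data $(A_{0,\eps},B_{0,\eps})\in\{(\tfrac43,0),(0,0),(0,\tfrac43)\}$ that the fractional part is preserved along each orbit. This is the same induction in different coordinates; your mod~$6$ computation on $u\pm2v$ and the paper's mod~$3$ computation on $(A,B)$ are equivalent once the gap above is patched.
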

\begin{proof} 
Due to (\ref{ABnks}),  \rm(\romannumeral1) is equivalent to the equalities 
\begin{eqnarray*}
&u_{k,-1}+2v_{k,-1}=u_{k,0}-2v_{k,0},\\
&u_{k,0}+2v_{k,0}=u_{k,1}-2v_{k,1}. 
\end{eqnarray*}
Both left and right parts of these equalities  are subject to (\ref{rec721}). Hence it is sufficient to check them  
for $k=0$ and $k=1$. This is an easy calculation which proves (\romannumeral1).  

According to (\ref{ABnks}),  (\ref{uvtonm}), and (\ref{ABnm}) the recurrence relations 
for $u_{k,\eps},v_{k,\eps}$ defined by $M^{2}$ may be rewritten for $A_{k,\eps}, B_{k,\eps}$  as follows:
\begin{eqnarray}\label{recAB}
\begin{cases}
A_{k+1,\eps}=-77A_{k,\eps}+342B_{k,\eps}+132,\\
B_{k+1,\eps}=-342A_{k,\eps}+1519B_{k,\eps}+588.
\end{cases}
\end{eqnarray}
By (\ref{ABnks}), the initial data relating to $T_{0}$  are $(\frac43,0,0)$ for $A_{0,\eps}$ and
$(0,0,\frac43)$ for $B_{0,\eps}$, where $\eps=-1,0,1$, respectively. Since 
\begin{eqnarray*}
-77=1519=1\mod3
\end{eqnarray*}
and other coefficients on the right are divisible by $3$, the fractional parts of $A_{k,-1}$ and 
$B_{k,1}$ equals $\frac13$ for all $k\in\bbN$ while the two others $A_{k,\eps},B_{k,\eps}$  are integer. 
Thus (\romannumeral2) is true. 
\end{proof}
\begin{corollary}
In every triple  $T_{k}=(p_{k.-1},p_{k,0},p_{k,1})$  the consecutive polynomials have a common quadratic 
factor. Polynomials from different triples and $p_{k,-1},p_{k,1}$ have no common factor. \qed
\end{corollary}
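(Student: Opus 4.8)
The statement to prove is the Corollary following Theorem~\ref{fourAB}: in every triple $T_{k}=(p_{k,-1},p_{k,0},p_{k,1})$ the consecutive polynomials share a common quadratic factor, while polynomials from different triples, as well as $p_{k,-1}$ and $p_{k,1}$ within a triple, have no common factor.

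\begin{proof}[Proof proposal]
The plan is to read off everything from the explicit data for the quadratic factors. Recall that $p_{k,\eps}=p_{n_{k,\eps},5}$ factors (over $\bbQ$) as $(\ze\bar\ze-A_{k,\eps}z^{2})(\ze\bar\ze-B_{k,\eps}z^{2})$, so two such polynomials have a common quadratic factor precisely when one of the four numbers $A$ or $B$ of one equals one of the other; and they are coprime exactly when all four are distinct. Thus the whole corollary reduces to comparing the lists $\{A_{k,\eps},B_{k,\eps}\}$.

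For the first claim, Theorem~\ref{fourAB}(i) gives $A_{k,0}=B_{k,-1}$ and $B_{k,0}=A_{k,1}$ for every $k$. The first equality says $p_{k,-1}$ and $p_{k,0}$ share the quadratic factor $\ze\bar\ze-A_{k,0}z^{2}=\ze\bar\ze-B_{k,-1}z^{2}$; the second says $p_{k,0}$ and $p_{k,1}$ share $\ze\bar\ze-B_{k,0}z^{2}=\ze\bar\ze-A_{k,1}z^{2}$. That is exactly ``consecutive polynomials in the triple have a common quadratic factor.''

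For the coprimality claims I would argue by size. Within a triple, ordering by $\eps$ agrees with the natural order on both components $u_{k,\eps},v_{k,\eps}$ (noted right after Lemma~\ref{nmvsuv}), hence by (\ref{ABnks}) with the order on $A,B$; so $A_{k,-1}<A_{k,0}<A_{k,1}$ and $B_{k,-1}<B_{k,0}<B_{k,1}$, and moreover $A_{k,\eps}\le B_{k,\eps}$. The only coincidences among the six numbers are the two of Theorem~\ref{fourAB}(i): indeed $A_{k,-1}$ is the strict minimum and $B_{k,1}$ the strict maximum of the six, so neither can coincide with anything; this forces $p_{k,-1}$ and $p_{k,1}$ to have disjoint factor sets, hence no common factor. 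For polynomials from different triples $T_{k}$ and $T_{\ell}$ with $k<\ell$, I use that the sequences $u_{k,\eps}$ (hence $n_{k,\eps}$, hence all the $A$'s and $B$'s, which grow with $n$ via (\ref{ABnm})) are strictly increasing in $k$ along each series $\eps$ by (\ref{rec721}) with positive initial data; a short interleaving estimate shows that even the largest factor occurring in $T_{k}$, namely $B_{k,1}$, is smaller than the smallest factor occurring in $T_{\ell}$, namely $A_{\ell,-1}$ --- equivalently $u_{k,1}<u_{\ell,-1}$ for $k<\ell$, which is immediate from $T_{\ell}=M^{2}T_{k}$ together with the monotonicity built into the ordering of $T_{0},T_{1}$. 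Hence all factors in $T_{k}$ are strictly smaller than all factors in $T_{\ell}$, so the corresponding polynomials are coprime.

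The only mild subtlety --- the ``main obstacle'', such as it is --- is the cross-triple interleaving: one must be sure that the three series do not merely each increase in $k$ but that the blocks $T_{k}$ genuinely stay in separate ranges, i.e. $\max T_{k}<\min T_{k+1}$. This follows by checking $u_{1,1}<u_{1,-1}^{(2)}=u_{2,-1}$ on the explicit triples $T_{1}$ and $M^{2}T_{1}$ and then propagating by the positivity of $M^{2}$; everything else is an immediate consequence of the factorization $p=(\ze\bar\ze-Az^{2})(\ze\bar\ze-Bz^{2})$ and of Theorem~\ref{fourAB}.
\end{proof}
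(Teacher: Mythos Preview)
Your argument is correct in outline and matches the paper's intent (the paper gives no proof, just \qed, and later records the chain $A_{1,-1}<A_{1,0}<A_{1,1}<A_{2,-1}<\cdots$ which is exactly your ordering device). A few justifications are sloppy, though easily repaired. First, deducing $A_{k,-1}<A_{k,0}<A_{k,1}$ ``by (\ref{ABnks})'' from the ordering of $(u_{k,\eps},v_{k,\eps})$ is not immediate, since $A=\frac16(u-2v-3)$ involves $u-2v$; cleaner is to use Theorem~\ref{fourAB}(i) itself: $A_{k,0}=B_{k,-1}>A_{k,-1}$ and $A_{k,1}=B_{k,0}>A_{k,0}$. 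Second, ``$A_{k,-1}$ is the strict minimum and $B_{k,1}$ the strict maximum, hence $p_{k,-1}$ and $p_{k,1}$ have disjoint factor sets'' omits the check $B_{k,-1}\ne A_{k,1}$; but this is just $A_{k,0}<B_{k,0}$, so the chain $A_{k,-1}<B_{k,-1}=A_{k,0}<A_{k,1}=B_{k,0}<B_{k,1}$ closes the gap. Third, the cross-triple inequality $B_{k,1}<A_{k+1,-1}$ is not literally ``equivalent'' to $u_{k,1}<u_{k+1,-1}$ --- it is $u_{k,1}+2v_{k,1}<u_{k+1,-1}-2v_{k+1,-1}$ --- but your verification on $T_{1}$, $M^{2}T_{1}$ followed by propagation via the positivity of the entries of $M^{2}$ still works for the correct inequality.
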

\begin{corollary}
A quadratic polynomial   $x^{2}+y^{2}-Az^{2}$ divides at most two  distinct $p_{k,\eps}$. This happens 
if and only if it divides one of them and $A$ is integer. \qed 
\end{corollary}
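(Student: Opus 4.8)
The plan is to extract everything from the explicit factorization
\begin{eqnarray*}
p_{k,\eps}=z\,(x^{2}+y^{2}-A_{k,\eps}z^{2})(x^{2}+y^{2}-B_{k,\eps}z^{2}),
\end{eqnarray*}
which follows from (\ref{poly5}) and the identification $p_{k,\eps}=p_{n_{k,\eps},5}$, together with Theorem~\ref{fourAB} and the preceding corollary. First I would observe that for every admissible pair $(k,\eps)$ one has $A_{k,\eps}\neq0$: if $A_{n}=\frac23(n-m)=0$ then $n=m$, and (\ref{2n3n5m}) forces $n=1$, which is excluded. Hence $x^{2}+y^{2}-A_{k,\eps}z^{2}$ and $x^{2}+y^{2}-B_{k,\eps}z^{2}$ are rank-three quadratic forms, so irreducible over $\bbQ$, and they are distinct because $A_{n}<B_{n}$ whenever $m>0$. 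Since a normalized quadratic $x^{2}+y^{2}-Az^{2}$ with $A\neq0$ cannot divide the monomial $z$ and, by unique factorization, must coincide with one of the two quadratic factors if it divides $p_{k,\eps}$, we get that $x^{2}+y^{2}-Az^{2}$ divides $p_{k,\eps}$ if and only if $A\in\{A_{k,\eps},B_{k,\eps}\}$; the value $A=0$ divides no $p_{k,\eps}$ and may be disregarded.

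Next I would list, for a fixed $A$, all pairs $(k,\eps)$ with $A\in\{A_{k,\eps},B_{k,\eps}\}$. Within one triple, Theorem~\ref{fourAB}(i) gives $B_{k,-1}=A_{k,0}$ and $B_{k,0}=A_{k,1}$; these are the only coincidences among the six numbers, since $A_{n}<B_{n}$ yields the strict chain $A_{k,-1}<B_{k,-1}=A_{k,0}<B_{k,0}=A_{k,1}<B_{k,1}$, and $p_{k,-1},p_{k,1}$ share no factor of the form $x^{2}+y^{2}-Az^{2}$ by the preceding corollary. The same corollary shows that the value sets of distinct triples are disjoint. Therefore a given $A$ either equals none of these numbers, or equals $A_{k,-1}$ for a unique $k$ (and then divides $p_{k,-1}$ only), or equals $B_{k,-1}=A_{k,0}$ (and then divides $p_{k,-1}$ and $p_{k,0}$ only), or equals $B_{k,0}=A_{k,1}$ (and then divides $p_{k,0}$ and $p_{k,1}$ only), or equals $B_{k,1}$ (and then divides $p_{k,1}$ only). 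In particular it divides at most two distinct $p_{k,\eps}$.

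Finally I would cross this list with the integrality statement of Theorem~\ref{fourAB}(ii): the two values shared by a pair of polynomials, $B_{k,-1}=A_{k,0}$ and $B_{k,0}=A_{k,1}$, are exactly the integer ones, while the values $A_{k,-1}$ and $B_{k,1}$ attached to a single polynomial are non-integral. Hence $x^{2}+y^{2}-Az^{2}$ divides two distinct $p_{k,\eps}$ precisely when it divides at least one of them and $A\in\bbZ$, which is the assertion. The only non-routine point is that a single value $A$ cannot recur across two different triples nor between $p_{k,-1}$ and $p_{k,1}$; since this is exactly the content of the preceding corollary, I expect no real obstacle.
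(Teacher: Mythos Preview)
Your argument is correct and is exactly the intended one: the paper leaves this corollary without proof (it is marked \qed), treating it as an immediate consequence of Theorem~\ref{fourAB} together with the preceding corollary, and you have simply made that deduction explicit. The only additions you supply beyond the paper are the easy observations that $A_{k,\eps}\neq0$ and that the strict chain $A_{k,-1}<B_{k,-1}=A_{k,0}<B_{k,0}=A_{k,1}<B_{k,1}$ pins down the four distinct values in each triple, both of which are unproblematic.
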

\subsection{Tables and formulas} According to (\ref{uvtonm}), for $n_{k,\eps},m_{k,\eps}$  we have 
\begin{eqnarray}\label{recnm}
\begin{cases}
n_{k+1,\eps}=721n_{k,\eps}+1140m_{k,\eps}+540,\\
m_{k+1,\eps}=456n_{k,\eps}+721m_{k,\eps}+342.
\end{cases}
\end{eqnarray}
The initial data relating to $T_{0}$ are 
\begin{eqnarray}\label{indall}
\begin{array}{cccc}
(u_{0,\eps},v_{0,\eps})&(7,-2),&(3,0),&(7,2),\\[2pt]
(n_{0,\eps},m_{0,\eps})&(1,-1), &(0,0)&(1, 1),\\[2pt]
(A_{0,\eps},B_{0,\eps})&\big(\frac43,0\big), &\big(0,0\big)&\big(0, \frac43\big).
\end{array}
\end{eqnarray}

The linear parts of the affine transformations on the right of (\ref{recnm})  and (\ref{recAB}) are conjugate to $M^{2}$.  
Hence we get the sequences subject to (\ref{rec721}) removing the origin to the fixed points of these transformations 
which are equal to $(-\frac34,0)$ and $(-\frac12,-\frac12)$, respectively. Let $X_{k}(x_{0},x_{1})$ be the sequence 
satisfying (\ref{rec721}) with the initial data $X_{0}(x_{0},x_{1})=x_{0}$, $X_{1}(x_{0},x_{1})=x_{1}$. Then 
\begin{eqnarray}\label{recns}
\begin{array}{rcl}
n_{k,\eps}&=&X_{k}(n_{0,\eps}+\frac34,n_{1,\eps}+\frac34)-\frac34,\\[3pt]
A_{k,\eps}&=&X_{k}(A_{0,\eps}+\frac12,A_{1,\eps}+\frac12)-\frac12,
\end{array}
\end{eqnarray} 
$m_{k,\eps}=X_{k}(m_{0,\eps},m_{1\eps})$, and $B_{k,\eps}$ is subject to the same formula as $A_{k,\eps}$. 
The data relating to $k=0$ are given in (\ref{indall}). Here is the similar table for $k=1$: 
\begin{eqnarray}\label{indall1}
\begin{array}{cccc}
(u_{1,\eps},v_{1,\eps})&(487, 154),& (2163, 684),&(9607, 3038)\\[2pt]
(n_{1,\eps},m_{1,\eps})&(121,77), &(540,342)&(2401, 1519),\\[2pt]
(A_{1,\eps},B_{1,\eps})&\big(\frac{88}3,132\big), &\big(132,588\big)&\big(588, \frac{7840}3\big).
\end{array}
\end{eqnarray}
For the generating function $G_{X}(t)=\sum_{k=0}^{\infty}X_{k}t^{k}$, where $X_{k}=X_{k}(x_{0},x_{1})$ is as above, 
we have 
\begin{eqnarray*}
&G_{X}(t)=\frac{x_{0}-(1442x_{0}-x_{1})t}{1-1442t+t^{2}}.
\end{eqnarray*}
Combining it with (\ref{recns}) and using the initial data, we get the formulas for all series:
\begin{eqnarray*}
\begin{array}{cccc}
G_ {n,\eps} &\frac {121 + 958 t + t^2} {(1 - t) (1 - 1442 t + t^2)} 
&\frac {540 (1 + t)} {(1 - t) (1 - 1442 t + t^2)} &\frac {2401 + 1322 t + t^2} {(1 - t) (1 - 1442 t + t^2)} \\[6 pt]
G_ {m,\eps} &\frac {77 + t} {1 - 1442 t + t^2} &\frac {342} {1 - 1442 t + t^2} &\frac {1519 - t} {1 - 1442 t + t^2}
\end{array}  
\end{eqnarray*}
The generating functions for $A_{k,\eps},B_{k,\eps}$ also may be calculated in this way or  
by the formulas $G_{A,\eps}=\frac23(G_{n,\eps}-G_{m,\eps})$, $G_{B,\eps}=\frac23(G_{n,\eps}+G_{m,\eps})$ which follow 
from (\ref{ABnm}):
\begin{eqnarray*}
\begin{array}{cccc}
G_ {A,\eps} &\frac{4 (22 + 517 t + t^2)}{3 (1 - t) (1 - 1442 t + t^2)}&\frac{12 (11 + 49 t)}{(1 - t) (1 - 1442 t + t^2)} 
&\frac{12 (49 + 11 t)}{(1 - t) (1 - 1442 t + t^2)} \\[6 pt]
G_ {B,\eps} &\frac{12 (11 + 49 t)}{(1 - t) (1 - 1442 t + t^2)} &\frac{12 (49 + 11 t)}{(1 - t) (1 - 1442 t + t^2)} 
&\frac{4 (1960 - 1421 t + t^2)}{3 (1 - t) (1 - 1442 t + t^2)}
\end{array}  
\end{eqnarray*}
The sequences $A_{k,\eps}$ satisfy the inequalities 
\begin{eqnarray*}
A_{1,-1}<A_{1,0}<A_{1,1}<A_{2,-1}<A_{2,0}<A_{2,1}<\dots.
\end{eqnarray*}
The generating function for the sequence of all $A_{n}$ enumerated in this order is 
$G_{A}=G_{A,-1}+t\,G_{A,0}+t^{2}\,G_{A,1}$. The function $G_{B}$ can be defined analogously. 
They are too awkward to work with but a computer computation shows that the following equality holds: 
\begin{eqnarray*}
G_{A}(t)-t\,G_{B}(t)=\frac{4}{3}+\frac{28 (1+t^3)}{1 - 1442 t^3 + t^6}
\end{eqnarray*}
It gives a hint of another proof of Theorem~\ref{fourAB}. The right-hand part characterizes the gaps 
between the triples. 
\section{The sets $\frR_{6}$ and $\frR_{7}$ are finite}
\subsection{The case $d=6$}  
As in the cases 
$k=4$ and $k=5$, we may write $f_{n,6}$ as 
\begin{eqnarray*}
\ze^{n-3}(\ze\bar\ze-A_{n}z^{2})(\ze\bar\ze-B_{n}z^{2})(\ze\bar\ze-C_{n}z^{2}).
\end{eqnarray*} 
If $f_{n,6}$ is reducible, then one of the numbers $A_{n},B_{n},C_{n}$ is rational. 
The equation  $\De f_{n,6}=0$ is equivalent to a system of linear equations with the elementary 
symmetric polynomials  of $A_{n},B_{n},C_{n}$. Resolving it we get the polynomial 
$p_{n,6}(x,y,z)=z^{6}\td p_{n,6}\left(\frac{x^{2}+y^{2}}{z^{2}}\right)$, where 
\begin{eqnarray*}
\td p_{n,6}(t)=t^{3}-6nt^{2}+4n(n-1)t-\frac{8}{15}n(n-1)(n-2)
\end{eqnarray*}
with roots $A_{n},B_{n},C_{n}$. It is more convenient to work with  the polynomial 
$q_{n,6}(t)=\frac{1}{8}\,\td p_{n,6}\left(2t+2n\right)$ that is the canonical form of $\td p_{n,6}$: 
\begin{eqnarray}\label{deftdp}
q_{n,6}(t)=t^{3}-n(2n+1)t-\frac{2}{15} n(2n+1)(4n+1). 
\end{eqnarray}   
Let $p,r\in\bbN$ and $p$ be prime. Recall that $p|m$ indicates that $p$ divides $m$. Set 
\begin{eqnarray*}
h(p,r)=\max\{\al\in\bbN\cup\{0\}:p^{\al} |r\}.  
\end{eqnarray*}
The definition can be extended onto the rational numbers $r=\frac{m}{k}$ by 
\begin{eqnarray*}
h(p,r)=h(p,m)-h(p,k).  
\end{eqnarray*}
Then $p|r$ means that $h(p,r)>0$. 
 \begin{lemma} \label{redTSR6}
 Suppose that $q_{n,6}$ admits a rational root. Let $p$ be a prime divisor of $n(2n+1)$. 
If  $p\neq2,3,5$, then $h(p,n(2n+1))$ is divisible by $3$.  
\end{lemma}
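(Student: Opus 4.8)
The plan is to exploit the fact that a monic cubic with a rational root has that root an integer (since its content is $1$, but here the constant term is $\frac{2}{15}n(2n+1)(4n+1)$, so the relevant statement is about the cubic $q_{n,6}$ with cleared denominators). First I would write $q_{n,6}(t)=t^{3}-Nt-\frac{2}{15}N(4n+1)$ with $N=n(2n+1)$, and observe that $\gcd(n,2n+1)=1$, so the prime factorizations of $n$ and $2n+1$ are coprime. A rational root $t_{0}$ of $q_{n,6}$, after multiplying through by $15$, is a root of $15t^{3}-15Nt-2N(4n+1)$; by the rational root theorem $t_{0}=\frac{a}{b}$ in lowest terms with $b\mid15$ and $a\mid 2N(4n+1)$. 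So $t_{0}$ has denominator dividing $15$, i.e. $15t_{0}\in\bbZ$, and for any prime $p\neq3,5$ we get $h(p,t_{0})\geq0$, that is, $t_{0}$ is a $p$-adic integer.

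Next I would localize at a prime $p\mid N$ with $p\neq2,3,5$. Since $15$ is a $p$-adic unit, $t_{0}\in\bbZ_{p}$ and from $t_{0}^{3}-Nt_{0}-\frac{2}{15}N(4n+1)=0$ we read off $t_{0}^{3}=N\bigl(t_{0}+\frac{2}{15}(4n+1)\bigr)$. The key point is to show the bracketed factor is a $p$-adic unit. Here is where the coprimality $\gcd(n,2n+1)=1$ enters: $p$ divides exactly one of $n$ and $2n+1$. If $p\mid 2n+1$ then $4n+1=2(2n+1)-1\equiv-1\pmod p$, hence $4n+1$ is a $p$-adic unit; and $p\mid N$ forces $t_{0}^{3}\equiv0\pmod p$, so $t_{0}\equiv0\pmod p$, so $t_{0}+\frac{2}{15}(4n+1)\equiv\frac{2}{15}(4n+1)$ which is a $p$-adic unit (as $2,15,4n+1$ all are). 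If instead $p\mid n$, then $2n+1\equiv1$ and $4n+1\equiv1\pmod p$, so again $4n+1$ is a unit and the same computation gives that the bracket is a $p$-adic unit. Either way, $h\bigl(p,\,t_{0}+\frac{2}{15}(4n+1)\bigr)=0$.

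Therefore $h(p,t_{0}^{3})=h(p,N)$, i.e. $3\,h(p,t_{0})=h(p,n(2n+1))$, and since $h(p,t_{0})\in\bbN\cup\{0\}$ this shows $3\mid h(p,n(2n+1))$, which is the claim. I expect the main obstacle to be purely bookkeeping: making sure the rational-root/content argument really does force $t_{0}\in\bbZ_{p}$ for all $p\notin\{2,3,5\}$ (one must be a little careful that clearing the denominator $15$ does not secretly introduce a factor of $2$ — it does not, since $15=3\cdot5$), and double-checking the two cases $p\mid n$ versus $p\mid 2n+1$ cover everything because $\gcd(n,2n+1)=1$. Once the $p$-adic valuation of the ``cofactor'' $t_{0}+\frac{2}{15}(4n+1)$ is pinned to $0$, the divisibility-by-$3$ conclusion is immediate from unique factorization in $\bbZ_{p}$. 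The exclusion of $p=2,3,5$ is exactly what is needed so that the denominator $15$ and the stray factor $2$ in the constant term are invisible $p$-adically.
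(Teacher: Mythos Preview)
Your proof is correct and follows essentially the same $p$-adic valuation argument as the paper: both reduce to showing that for $p\nmid 30$ the three terms $t_0^{3}$, $Nt_0$, $\frac{2}{15}N(4n+1)$ force $3\,h(p,t_0)=h(p,N)$. The only cosmetic difference is that the paper argues via the ultrametric ``two of the three valuations $3\beta,\ \alpha+\beta,\ \alpha$ must attain the minimum'' (and then $\beta>0$ forces $3\beta=\alpha$), whereas you factor $t_0^{3}=N\bigl(t_0+\frac{2}{15}(4n+1)\bigr)$ and check directly that the bracket is a $p$-adic unit using $4n+1\equiv\pm1\pmod p$; your version makes the role of the rational root theorem (denominator dividing $15$) a bit more explicit, but the content is the same.
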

\begin{proof}
Let $r=\frac{m}{k}$, where $m,k$ are coprime integers. Recall that if $r$ is a root of a polynomial 
$a_{n}t^{n}+\dots+a_{0}$ with integer coefficients, then $m|a_{0}$ and $k|a_{n}$. 


The second and the third coefficients of the polynomial $3\cdot5^{3}\,q_{n,6}\left(\frac{k}{5}\right)$ are integer 
and $n(2n+1)$ is their common factor. The assumption $p\neq2,3,5$ implies 
\begin{eqnarray*}
h(p,15n(2n+1))=h(p,2n(2n+1)(4n+1))
\end{eqnarray*}
since $n$, $2n+1$, and $4n+1$ are pairwise coprime. Let us denote by $\al$ the above number and 
set ${\be}=h(p,c)$.  We have $\al>0$ and consequently ${\be}=h(p,c)=\frac13 h(p,c^{3})>0$.  Put 
\begin{eqnarray}\label{albebal}
\mu=\min\{3\be,\al+\be,\al\}. 
\end{eqnarray}
Dividing (\ref{deftdp}) with $t=c$ by $p^{\mu}$ we get a sum of three rational numbers such that at least one 
of them is not  divisible by $p$. We get a contradiction if the others are divisible because the sum equals zero. 
Hence there are at least two minimal numbers in the triple $3\be, \al+\be,\al$. Then the inequality $\be>0$ 
implies $3\be=\al$. This proves the lemma. 
\end{proof}
\subsection{The case $d=7$} We perform a preparatory work for the case $d=7$ before proving the main result. 
It is similar to that is above.  
We have 
\begin{eqnarray*}
\td p_{n,7}(t)= t^{3}-2nt^{2}+\frac45n(n-1)t-\frac{8}{105}n(n-1)(n-2)
\end{eqnarray*}
and the following expression for $q_{n,7}(t)=\td p_{n,7}(t+\frac23 n)$:
\begin{eqnarray*}
q_{n,7}(t)=t^{3}-\frac4{15}n(2n+3)t-\frac{16}{945}n(2n+3)(4n+3).
\end{eqnarray*}
 \begin{lemma} \label{redTSR7}
 Let $q_{n,7}$ have a rational root and let $p$ be a prime divisor of $n(2n+3)$. 
If  $p\neq2,3,5,7$, then $h(p,n(2n+3))$ is divisible by $3$.  
\end{lemma}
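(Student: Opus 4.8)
The plan is to mimic the proof of Lemma~\ref{redTSR6} almost verbatim, adjusting the arithmetic to the coefficients of $q_{n,7}$. Write $q_{n,7}(t)=t^{3}+at+b$ with $a=-\frac4{15}n(2n+3)$ and $b=-\frac{16}{945}n(2n+3)(4n+3)$, and note $945=27\cdot5\cdot7$ and $15=3\cdot5$. First I would clear denominators: the polynomial $945\cdot q_{n,7}(t)$ has integer coefficients once one also rescales the variable, so consider $3^{3}\cdot5\cdot7\cdot q_{n,7}\!\left(\frac{t}{3}\right)$ or a similar substitution (as in the $d=6$ case, where the substitution $t\mapsto\frac{k}{5}$ was used) to obtain a monic-in-spirit polynomial whose lower coefficients have $n(2n+3)$ as a common factor while the prime $p\notin\{2,3,5,7\}$ sees the full $p$-adic valuation. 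Since $n$, $2n+3$, and $4n+3$ are pairwise coprime (their pairwise gcd's divide $3$), for $p\neq2,3,5,7$ we get the key valuation identity
\begin{eqnarray*}
h(p,105\,n(2n+3))=h(p,2n(2n+3)(4n+3))=:\al>0.
\end{eqnarray*}

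Next I would run the same three-term valuation argument. Suppose $c=\frac{m}{k}$ is a rational root of $q_{n,7}$, in lowest terms; by the rational root theorem (applied to the integral polynomial obtained after clearing denominators) $p$ does not divide $k$, so $h(p,c)=h(p,m)=:\be\geq0$. Write $q_{n,7}(c)=0$ as $c^{3}+ac+b=0$ and compare the $p$-adic valuations of the three summands: $h(p,c^{3})=3\be$, $h(p,ac)=\al+\be$ (using $h(p,a)=\al$, since $p\nmid15$), and $h(p,b)=\al$ (using $h(p,b)=h(p,n(2n+3)(4n+3))=\al$ as $p\nmid\frac{16}{945}$). If a sum of three rationals is zero, the minimum of their valuations is attained at least twice. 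From $\be\geq0$ one has $3\be$ and $\al+\be$ cannot be the unique minimum unless paired with each other or with $\al$; the only way to get two equal minima among $\{3\be,\al+\be,\al\}$ consistent with $\al>0$ is $3\be=\al$ (if $3\be=\al+\be$ then $\be=\frac{\al}{2}>0$ and then $\al+\be>\al$, making $\al$ the strict minimum — a contradiction; if $\al+\be=\al$ then $\be=0$ and then $3\be=0<\al$ is the strict minimum — again a contradiction). Hence $\al=3\be$ is divisible by $3$, which is exactly the claim.

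The one genuine difference from the $d=6$ case is bookkeeping: here $105=3\cdot5\cdot7$ rather than $15=3\cdot5$, and the constant term of $q_{n,7}$ carries a factor $4n+3$ instead of $4n+1$; but $4n+3$ is still coprime to $n$ and to $2n+3$ up to the prime $3$, so excluding $p=7$ in addition to $p=2,3,5$ restores the clean valuation identity. I would also double-check that after the rescaling $t\mapsto t/3$ (or whichever substitution makes everything integral) no extra factor of $p$ is introduced into the leading coefficient, so that the rational-root constraint $p\nmid k$ still holds — this is routine since the rescaling only involves the primes $3,5,7$.

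The main obstacle, such as it is, is purely the arithmetic of choosing the right substitution so that all three coefficients become $p$-integral with the leading coefficient a unit at $p$, and verifying that the common factor $n(2n+3)$ sits where claimed; once that is set up, the valuation trinomial argument is identical to Lemma~\ref{redTSR6}. I do not expect any conceptual difficulty — this lemma is the $d=7$ analogue and the proof should be a near-copy with $15\rightsquigarrow105$, $4n+1\rightsquigarrow4n+3$, $2n+1\rightsquigarrow2n+3$, and the excluded-prime set enlarged by $7$.
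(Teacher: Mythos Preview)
Your proposal is correct and follows essentially the same route as the paper: clear denominators by multiplying by $945=3^{3}\cdot5\cdot7$ (no variable rescaling is actually needed --- $945\,q_{n,7}(t)=945t^{3}-252\,n(2n+3)t-16\,n(2n+3)(4n+3)$ already has integer coefficients), use that $n$, $2n+3$, $4n+3$ are pairwise coprime away from the prime $3$, and compare the $p$-adic valuations $3\be,\al+\be,\al$ of the three summands. The paper's version simply asserts $\be>0$ and then $\al=3\be$, whereas you spell out the two contradictory cases explicitly; the arguments are otherwise identical.
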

\begin{proof}
We consider the polynomial $945q_{n.7}$ which has integer coefficients. Note that $945=9!!=3^{3}\cdot5\cdot7$. 
The assumption implies 
\begin{eqnarray*}
h(p,2^{2}\cdot3^{2}\cdot7\,n(2n+3))=h(p,16n(2n+3)(4n+3))=:\al>0
\end{eqnarray*}
since the greatest common divisor of every pair in the triple $n$, $2n+3$, and $4n+3$ is either $3$ or $1$. 
Set ${\be}=h(p,c)$ and let $\mu$ be defined by (\ref{albebal}). Clearly, $\be>0$.  
As in the case $d=6$, there are at least two minimal numbers in the triple $3\be, \al+\be,\al$. Hence 
$\al=3\be$.  This proves the lemma. 
\end{proof}

\subsection{Almost all polynomials $p_{n,6}$ and $p_{n,7}$ are irreducible}
 \begin{theorem}
 The sets $\frR_{6}$ and $\frR_{7}$ are finite.
 \end{theorem}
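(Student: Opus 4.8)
The strategy is to reduce the question, for $d=6$ and $d=7$ separately, to a single Thue equation and invoke the Thue--Siegel--Roth theorem (or already Thue's theorem) on the finiteness of integral solutions. I will carry out the case $d=6$ in detail; the case $d=7$ is identical with the obvious substitutions ($2n+1 \rightsquigarrow 2n+3$, $4n+1 \rightsquigarrow 4n+3$, and the bad primes $\{2,3,5\}$ replaced by $\{2,3,5,7\}$), using Lemma~\ref{redTSR7} in place of Lemma~\ref{redTSR6}.

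\emph{Step 1: from a rational root to a near-cube factorization of $n$ and $2n+1$.} Suppose $f_{n,6}\in\frR_6$, so $q_{n,6}$ has a rational root. Since $n$ and $2n+1$ are coprime, the prime factorizations of $n$ and of $2n+1$ involve disjoint sets of primes. By Lemma~\ref{redTSR6}, for every prime $p\neq2,3,5$ dividing $n(2n+1)$ the exponent $h(p,n(2n+1))$ is a multiple of $3$; hence the part of $n$ (respectively of $2n+1$) supported away from $\{2,3,5\}$ is a perfect cube. Splitting off the bounded contribution of the primes $2,3,5$, one writes
\begin{eqnarray*}
n = K u^{3}, \qquad 2n+1 = L v^{3},
\end{eqnarray*}
where $u,v\in\bbN$ and $K,L$ range over the finite set of positive integers whose only prime factors are $2,3,5$ and whose $p$-adic valuation at each of $2,3,5$ is $0,1$ or $2$ — so at most $3^{3}=27$ possibilities for each of $K$ and $L$. (One may shrink this set further, e.g. using that $n$ and $2n+1$ have opposite parity, but finiteness is all that is needed.)

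\emph{Step 2: the Thue equation.} Eliminating $n$ from $n=Ku^{3}$ and $2n+1=Lv^{3}$ gives
\begin{eqnarray*}
L v^{3} - 2 K u^{3} = 1,
\end{eqnarray*}
and in the case $d=7$ the analogous computation with $q_{n,7}$ yields $Lv^{3}-2Ku^{3}=3$. For each fixed pair $(K,L)$ from the finite list of Step~1, the left-hand side is a binary cubic form $F_{K,L}(v,u)=Lv^{3}-2Ku^{3}$; it is irreducible over $\bbQ$ and not a perfect cube of a linear form, since $L/(2K)$ is never a rational cube for these $K,L$ (indeed $2\mid 2K$ but $2\nmid L v^3$ would force parity contradictions, or more simply one checks the finitely many ratios directly). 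By Thue's theorem — a special case of Thue--Siegel--Roth — the equation $F_{K,L}(v,u)=1$ (resp.\ $=3$) has only finitely many solutions $(u,v)\in\bbZ^{2}$. Taking the union over the finitely many admissible $(K,L)$ shows that only finitely many values of $n$ can occur, hence $\frR_6$ (resp.\ $\frR_7$) is finite.

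\emph{Main obstacle.} The only genuine input is the effective/ineffective finiteness in Step~2; the reduction in Step~1 is elementary once Lemmas~\ref{redTSR6}--\ref{redTSR7} are in hand. The point requiring a little care is verifying, for each of the finitely many $(K,L)$, that the cubic form $Lv^{3}-2Ku^{3}$ is genuinely of degree $3$ and irreducible (not a product of a linear form with a quadratic, which here just means $L/(2K)\notin(\bbQ^{\times})^{3}$) so that Thue's theorem applies; this is a finite check. I would also remark that, because Thue--Siegel--Roth is in general ineffective, this argument bounds the number of elements of $\frR_6$ and $\frR_7$ but does not by itself list them — determining $\frR_6$ and $\frR_7$ explicitly would require solving finitely many Thue equations by effective methods (Baker's theory), which is beyond what the stated theorem claims.
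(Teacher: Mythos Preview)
Your proposal is correct and follows essentially the same route as the paper: use Lemmas~\ref{redTSR6} and~\ref{redTSR7} to force $n=Ku^{3}$, $2n+1=Lv^{3}$ (resp.\ $2n+3=Lv^{3}$) with $K,L$ ranging over a finite set, obtain the Thue equations $Lv^{3}-2Ku^{3}=1$ (resp.\ $=3$), and conclude by Thue--Siegel--Roth. One small remark: your claim that $L/(2K)$ is \emph{never} a rational cube is not quite justified by the parenthetical parity argument (for instance $h(2,K)=2$ is not excluded), but this is harmless since $Lv^{3}-2Ku^{3}$ always has three distinct linear factors over~$\bbC$, which is all Thue's theorem requires.
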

 \begin{proof}
 Since $\deg p_{n,6}=\deg p_{n,7}=3$, the reducibility of these polynomials is equivalent to 
 existence of a rational root. Thus we may apply Lemma~\ref{redTSR6} and Lemma~\ref{redTSR7}, 
 respectively. Let $d=6$. Then  
\begin{eqnarray*}
n(2n+1)=2^{\al_{1}}3^{\al_{2}}5^{\al_{3}}p_{1}^{3\be_{1}}p_{2}^{3\be_{2}}\cdots p_{k}^{3\be_{k}}, 
\end{eqnarray*}
where $p_{1},\dots,p_{k}$ are primes distinct from $2,3,5$.  Similar factorization holds for both $n$ and $2n+1$ 
since they are coprime. It follows that 
\begin{eqnarray*}
n=Ku^{3}~~\text{and}~~ 2n+1=Lv^{3}, 
\end{eqnarray*}
where $u,v\in\bbN$ and $K,L=2^{\al}3^{\be}5^{\ga}$ with some $\al,\be,\ga\in\{0,1,2\}$.  
Hence $u,v$ satisfy the Diophantine equation 
\begin{eqnarray}\label{kluvdio}
Lv^{3}-2Ku^{3}=1. 
\end{eqnarray}
It is well known that the set of solutions to every such equation is finite due to the celebrated  Thue--Siegel--Roth 
theorem:  
\begin{itemize}
\item {\it for any irrational algebraic number $a$ and $\ep>0$ there exists $C>0$ such that
the inequality $\left|a-\frac{l}{k}\right|>\frac{C}{k^{2+\ep}}$ holds for every $l\in\bbZ$ and $k\in\bbN$}. 
\end{itemize}
It obviously implies that for any $C>0$ the inverse inequality $\left|a-\frac{l}{k}\right|<\frac{C}{k^{2+\ep}}$
may be true only for finite set of $\frac{l}{k}$. For solutions $u,v$ to (\ref{kluvdio}) it is easy to derive the 
inequalities $0<a-\frac{u}{v}<\frac{1}{2Ka^{2}v^{3}}$, where $a=\sqrt[3]{\frac{L}{2K}}$. Thus the 
set $\frR_{6}$ is finite. 

The above arguments with minor changes prove that $\frR_{7}$ is finite. We have to add $7$ 
to the set $\{2,3,5\}$, note that only $3$ may be a nontrivial common divisor of $n$ 
and $2n+3$, replace  (\ref{kluvdio}) with the equation $Lv^{3}-2Ku^{3}=3$, and multiply the upper 
bound for $a-\frac{u}{v}$ by $3$. This proves the theorem. 
 \end{proof}
\section{Remarks} 
\subsection{} There are more than a hundred Diophantine equations that satisfy the above conditions. 
Perhaps, a half or more of them have no solution but this needs a careful analysis.  The books \cite{DF40}, \cite{Mo69} 
contain fundamental facts concerting Diophantine equations of third degree.  For example,  it is known that the 
equation $u^{3}-Dv^{3}=1$, where $D$ is free of cubes, may have at most one solution distinct from the trivial $u=1$, 
$v=0$ (see \cite[Ch. VI, \S71, Theorem V]{DF40} or \cite[Ch. 24, Theorem~5]{Mo69}).  Note that the existence of a 
solution does not imply the reducibility of the relating polynomial $p_{n,6}$ or $p_{n,7}$.
\subsection{} Let $C$ be a round cone in $\bbR^{3}$ which is not a plane and $\cH(C)$ be the space of all
harmonic polynomials that vanish on $C$. There are such cones $C$ in $\bbR^{4}$ with infinite dimensional 
$\cH(C)$ (see \cite[Example~4.3]{LM15} ). To the best of my knowledge, no such example in $\bbR^{3}$ is known. 
The functions $f_{n,d}$, $d\leq5$, give examples of $C$ with $\dim\cH(C)\geq8$. 
For example,  
\begin{itemize} 
\item $x^{2}+y^{2}-44z^{2}$ divides $p_{22,2}$, $p_{66,3}$, $p_{10,4}$, and $p_{99,4}$,
\item $x^{2}+y^{2}-132 z^{2}$ divides $p_{66,2}$, $p_{198,3}$, $p_{77,5}$, and $p_{540,5}$.
\end{itemize}
Every $p_{n,d}$ relates to a couple of harmonic functions $f_{n,d}$ and $\bar f_{n,d}$ with the same zeros. 
I was not able to find common quadratic divisors for $p_{n,4}$ and $p_{m,5}$.


\end{document}